\newtheorem{lemma}{Lemma}[section]
\newtheorem{thm}{Theorem}[section]
\newtheorem{proposition}{Proposition}
\begin{document}
\title{Long-time asymptotic behavior of a  mixed schr\"{o}dinger equation with weighted Sobolev initial data }
\author{Qiaoyuan Cheng, Yiling Yang and   Engui Fan$^1$\thanks{\ Corresponding author and email address: faneg@fudan.edu.cn } }
\footnotetext[1]{ \  School of Mathematical Sciences  and Key Laboratory of Mathematics for Nonlinear Science, Fudan University, Shanghai 200433, P.R. China.}

\date{ }

\maketitle
\begin{abstract}
\baselineskip=19pt

We apply $\bar{\partial}$  steepest descent method
 to obtain   sharp asymptotics  for a  mixed schr\"{o}dinger equation
$$
q_t+iq_{xx}-ia (\vert q \vert^2q)_{x}-2b^2\vert q \vert^2q=0,
$$
$$
q(x,t=0)=q_0(x),
$$
under essentially minimal regularity
assumptions on   initial data   in a   weighted Sobolev space $q_0(x) \in H^{2,2}(\mathbb{R})$.
In the asymptotic expression,    the  leading  order  term  $\mathcal{O}(t^{-1/2})$ comes   from   dispersive  part $q_t+iq_{xx}$   and    the  error order
 $\mathcal{O}(t^{-3/4})$   from a $\overline\partial$ equation.
\\
\\Keywords: Mixed schr\"{o}dinger equation; Lax pair;  Riemann-Hilbert problem; $\bar{\partial}$ steepest descent method;  long-time asymptotic.
\end{abstract}
\baselineskip=20pt

\newpage
\section{Introction}
\hspace*{\parindent}

 In this paper, we consider a  mixed nonlinear Schr\"{o}dinger (NLS) equation with  a usual cubic nonlinear term and a
derivative cubic nonlinear term
\begin{align}
& q_t+iq_{xx}-ia (\vert q \vert^2q)_{x}-2b^2\vert q \vert^2q=0, \label{MNLS}\\
& q(x,t=0)=q_0(x)\in H^{2,2}(\mathbb{R}),\label{MNLS2}
\end{align}
where
$$H^{2,2}(\mathbb{R})=\{f\in L^2(\mathbb{R}): x^2f,f''\in L^2(\mathbb{R})\}. $$
The equation (\ref{MNLS})  was presented and further solved by the inverse scattering method \cite{ago1979,imf1980} and can be used to describe Alfv\'{e}n waves propagating along the magnetic field in cold plasmas and the deep-water gravity waves \cite{Mio1,Stiassnie1}. The term $i(|q|^2q)_x$ in the equation (\ref{MNLS}) is called the self-steepening term, which causes an optical pulse to become asymmetric and steepen upward at the trailing edge \cite{Al, Yang}.  The equation (\ref{MNLS}) also describes the short pulses propagate in a long optical fiber characterized by a nonlinear refractive index \cite{Nakatsuka1,Tzoar1}. Brizhik et al showed that the modified NLS equation (\ref{MNLS}), unlike the classical NLS equation, possesses static localized solutions when the effective nonlinearity parameter is larger than a certain critical value \cite{RN13}.

For $a=0$, the equation (\ref{MNLS}) reduces the classical defocusing Schr\"{o}dinger equation
\begin{equation}
iu_t+u_{xx}-2b^2|u|^2u=0,\label{nls}
\end{equation}
which has important applications in a wide variety of fields such as nonlinear optics, deep water waves, plasma physics, etc \cite{RN5,RN6}.

For $b=0$, the equation (\ref{MNLS}) reduces to the Kaup-Newell equation \cite{RN4}
\begin{equation}
iu_t+u_{xx}-ia(|u|^2u)_x=0. \label{kn}
\end{equation}

For $a, b  \not=0$, the equation (\ref{MNLS}) is equivalent to the modified  NLS equation
\begin{equation}
q_t+q_{xx}+ia(|q|^2 q)_x-2|q|^2q=0,  \label{mnls}
\end{equation}	
which was also called the perturbation NLS equation \cite{Maimistov1}.

There are various works on  mixed NLS  equation (\ref{MNLS}) and the modified NLS equation (\ref{mnls}). For example,  Date  analyzed  the periodic  soliton  solutions   \cite{pso1985}. In 1991, Guo   and Tian studied the unique existence and the decay behaviours of the smooth solutions to the initial value problem \cite{oss1991}.
 In 1994, Tian  and Zhang  proved that the Cauchy problem of initial value in Sobolev space has a unique weak solution \cite{ows1994}.  The existence and nonexistence of global solution and blow-up solution  for this equation  were also investigated \cite{bsf2004}.  The bright,  dark envelope solutions and Soliton behavior with N-fold Darboux transformation for  mixed nonlinear Schr\"{o}dinger equation were also discussed \cite{mfd2015,sbf2013,hs2015}.

 The local well-posedness of smooth solutions  in the Sobolev spaces $H^s(\mathbb{R})$, $s>3/2$   was
established by Tsutsumi and Fukuda \cite{osf} and later extended to solutions with low regularity in the Sobolev space $H ^{1/2}(\mathbb{R})$ by Takaoka \cite{ht1999}.
For initial conditions in the energy space $H^{1}(\mathbb{R})$,  Hayashi-Ozawa proved that solutions exist in $H^{1}(\mathbb{R})$\cite{nt1992}.   Colliander et al   extended this result to $u_0\in H^{1/2+\epsilon}(\mathbb{R})$.\cite{jm2002}    More recently,  global  well-posedness of  derivative NLS equation
 with initial conditions $u_0\in H^{1}$ and $H^{1/2}(\mathbb{R})$, respectively  has been discussed by  Guo\cite{zy2017} and Wu \cite{yw2015} .
 According to the existing conclusion, the global well-posedness of DNLS in $H^{2,2}(\mathbb{R})$ has been rigorously proved by using the inverse scattering transformation  \cite{gwp33,tdn}.

In recent years, McLaughlin and Miller developed a $\bar\partial$-steepest decedent method for obtaining  asymptotic  of RH problems  based on  $\bar\partial$-problems \cite{IMRN2006}.  This method  has been  successfully adapted to study the  NLS  equation and derivative NLS equation \cite{lta29,lta31,ANN2018}.
We recently  have applied  this  method to  obtain asymptotic for the  Kundu-Eckhaus equation and modified NLS equation with weighted Sobolev initial data \cite{Ma2019,yf2019}.

Recently,   for  the  mixed defocusing nonlinear Schr\"{o}dinger equation (\ref{MNLS})
with Schwartz initial data $q_{0}(x) \in \mathcal{S}(\mathbb{R})$,  by defining   a     general   analytical   domain and
two reflection  coefficients,    we found   an  unified  long time asymptotic formula via the Deift-Zhou nonlinear steepest descent method \cite{CF}
\begin{align}
&q(x,t)= t^{-1/2}\alpha(z_0)e^{i(4tz_{0}^{2}-\nu(z_0)\log8t)}e^{-\frac{2ia}{\pi}\int^{+\infty}_{\lambda_0}\log(1-|r(\lambda)|^2)(a\lambda-b)\mathrm{d}\lambda}+\mathcal{O}(t^{-1}\log t),\nonumber
\end{align}
which  covers results on classical defocusing Schr\"{o}dinger equation, derivative Schr\"{o}dinger equation and modified Schr\"{o}dinger equation   as special cases.  In this paper,  for      much weaker weighted Sobolev initial data $q_{0}(x) \in H^{2, 2}(\mathbb{R})$,  we   apply  $\bar\partial$   steepest decedent method  to     give   long time asymptotic  for   the
initial value problem  (\ref{MNLS})-(\ref{MNLS2})
\begin{equation}
q(x,t)=t^{-1/2}\alpha(z_0)e^{i(4tz_{0}^{2}-\nu(z_0)\log8t)}e^{-\frac{2ia}{\pi}\int^{+\infty}_{\lambda_0}\log(1-|r(\lambda)|^2)(a\lambda-b)\mathrm{d}\lambda}+O(t^{-3/4}).\nonumber
\end{equation}
The advantages of this method  are that it avoids delicate estimates involving $L^p$ estimates of Cauchy projection operators.  Here we consider  the mixed NLS equation  (\ref{MNLS}) in  a weighted Sobolev space
 $H^{2,2}(\mathbb{R})$  to  global existence  of solutions and provide a long time  asymptotic result in classical sense
 by using inverse scattering theory.

Our  article is arranged as follows.  In section 2 and  section 3,
based on  the Lax pair of  the mixed NLS equation (\ref{MNLS}),   we  set up a RH problem $N(z)$ associated with the initial value problem (\ref{MNLS})-(\ref{MNLS2}),  which will be used
 to analyze   long-time asymptotics  of the mixed NLS equation (\ref{MNLS}) in our paper. In section 4, by defining a transformation $N^{(1)}(z)=N(z)\delta^{-\sigma_3}$,  we get a  RH problem   whose jump matrices admits
 two type of  up and down   triangular decompositions.  In section 5,
 we establish a hybrid $\overline{\partial}$ problem by the transformation $N^{(2)}(z)=N^{(1)}(z)\mathcal{R}^{(2)}$.
 The $N^{(2)}(z)$ is further decomposed into a pure RH problem  for $N^{rhp}(z)$ and a pure $\overline{\partial}$-problem for $E(x,t,z)$,  where the pure RH problem $N^{rhp}(z)$  is a solvable model
 associated with   a  Weber equation;
      the error estimates on the  pure $\overline{\partial}$-problem  will   given  in  In section 6.
      Finally in section 7,  according to a series of transformations made above,
we establish  a relation $N(z)=E(z)N^{rhp}(z)\mathcal{R}^{(2)}(z)^{-1}\delta^{\sigma_3}$ ,  from
which the asymptotic behavior of the mixed NLS equation can be obtained.

\section{Spectral analysis}

\hspace*{\parindent}
The defocusing  mixed NLS equation (\ref{MNLS})  admits  the following   Lax pair \cite{ago1979}
\begin{align}
&\psi_x+i\lambda(a\lambda-2b)\sigma_3\psi=P_1\psi,\label{Lax 1}\\
&\psi_t+2i\lambda^2(a\lambda-2b)^2\sigma_3\psi=P_2\psi, \label{Lax 2}
\end{align}
where
\begin{equation}\label{hfjf}
\begin{aligned}
&P_1=(\lambda a-b)Q, \ \ Q=\begin{pmatrix}
0&q\\
\bar{q}&0
\end{pmatrix},\\
&P_2=-i(a^2\lambda^{2}-2ab\lambda+b^2)Q^{2}\sigma_3+(2a^2\lambda^3-6ab\lambda^2+4b^2\lambda)Q\\
&+i(\lambda a-b)Q_x\sigma_3+(\lambda a^2-ab)Q^3.
\end{aligned}
\end{equation}

\subsection{Asymptotic  }
\hspace*{\parindent}
Due  to $q_0(x)\in H^{2,2}(\mathbb{R})$,  so as $x \rightarrow \pm\infty$,  the  Lax pair (\ref{Lax 1})-(\ref{Lax 2}) becomes
\begin{align}
&\psi_x+i\lambda(a\lambda-2b)\sigma_3\psi\sim 0,\nonumber\\
&\psi_t+2i\lambda^2(a\lambda-2b)^2\sigma_3\psi\sim 0, \nonumber
\end{align}
which implies that the Jost solutions of  the Lax pair (\ref{Lax 1})-(\ref{Lax 2}) admit     asymptotic
\begin{equation}
\psi\sim e^{-it\theta(x,t,\lambda)\sigma_3}, \ \ x\rightarrow \pm \infty,
\end{equation}
where $\theta(x,t,\lambda)= \lambda(a\lambda-2b)x/t +2 \lambda^2(a\lambda-2b)^{2}$.
Therefore,   making transformation
\begin{equation}
\Psi=\psi e^{it\theta(x,t,\lambda)\sigma_3},\label{bianhuan}
\end{equation}
we have
\begin{equation}
\Psi \sim  I, \ x\rightarrow \pm \infty, \label{xdjx}
\end{equation}
and $\Psi$  satisfies a new  Lax pair
\begin{align}
&\Psi_x+i\lambda(a\lambda-2b)[\sigma_3,\Psi]=P_1\Psi, \label{lax3}\\
&\Psi_t+2i\lambda^2(a\lambda-2b)^2[\sigma_3,\Psi]=P_2\Psi. \label{lax4}
\end{align}
We consider asymptotic expansion
\begin{equation}
\Psi=\Psi_0+\frac{\Psi_1}{\lambda}+\frac{\Psi_2}{\lambda^2}+O(\frac{1}{\lambda^{3}}),\qquad \lambda\rightarrow\infty, \label{expan}
\end{equation}
where $ \Psi_0, \Psi_1, \Psi_2 $ are independent of $ \lambda $.
Substituting (\ref{expan}) into (\ref{lax3}) and   comparing  the coefficients of
$\lambda$,  we obtain  that $\Psi_0$ is a diagonal matrix and
\begin{align}
 &\Psi_{1}=\frac{i}{2}Q\Psi_0\sigma_{3},\label{1o} \\
 &\Psi_{0x}+ia[\sigma_3,\Psi_2]=aQ\Psi_{1}+bQ\Psi_0. \label{ree}
\end{align}
In the same way, substituting (\ref{expan}) into (\ref{lax4}) and   comparing  the coefficients of
$\lambda$ in the same order leads to
\begin{align}
&\Psi_{0x}=\frac{i}{2}a|q|^2\sigma_3\Psi_0.\label{jieguo1}\\
&\Psi_{0t}=[\frac{3}{4}ia^2|q|^4+\frac{1}{2}a(\bar q q_{x}-\bar q_{x}q)]\sigma_3\Psi_0.\label{jieguo2}
\end{align}

Noting  that  the mixed NLS equation  (\ref{MNLS}) admits the conservation law
\begin{equation}
\left(\frac{i}{2}a|q|^2 \right)_t=\left[\frac{3}{4}ia^2|q|^4+\frac{1}{2}a(\bar{q}q_x-\bar{q}_xq)\right]_{x},
\end{equation}
so  two equations (\ref{jieguo1}) and (\ref{jieguo2})  are compatible if we define
\begin{equation}
\Psi_0(x,t)=e^{\frac{ia}{2}\int_{-\infty}^x |q(x',t)|^2dx' \sigma_3}.\label{jieguo}
\end{equation}
 We introduce   a new function $\mu=\mu(x,t,\lambda)$ by
\begin{equation}
\Psi(x,t,\lambda)=\Psi_0(x,t)\mu(x,t,\lambda),\label{xx}
\end{equation}
then $\mu$  admits  the asymptotic
\begin{equation}
\mu=I+O(\frac{1}{\lambda}),\quad \lambda\rightarrow\infty,
\end{equation}
and satisfies  the  Lax pair
\begin{subequations}
\begin{align}
&\mu_{x}+i\lambda(a\lambda-2b)[\sigma_{3},\mu]=
e^{-\frac{ia}{2}\int_{-\infty}^x|q|^2(x',t)dx'\widehat{\sigma}_3} H_1 \mu, \label{kk1}\\
&\mu_{t}+2i\lambda^{2}(a\lambda-2b)^{2}[\sigma_{3},\mu]=e^{-\frac{ia}{2}\int_{-\infty}^x|q|^2(x',t)dx'\widehat{\sigma}_3} H_2\mu,\label{kk2}
\end{align}
\end{subequations}
 where
 $$H_1= P_1-\frac{ia}{2}|q|^2(x,t)\sigma_3, \ \   H_2=P_2-\int_{-\infty}^x(q\overline{q}_t+q_t\overline{q})dx'\sigma_3.
 $$

\subsection{Analyticity  and symmetry   }
\hspace*{\parindent}
According to (\ref{1o}) and (\ref{xx}), we can establish the relationship between the solution $q(x,t)$ of the  mixed NLS and $\Psi$ as follows
\begin{equation}
q= 2i e^{\frac{ia}{2}\int_{-\infty}^x |q(x',t)|^2dx'}\lim_{\lambda\rightarrow \infty}(\lambda\Psi)_{12},
\end{equation}
which combines with  (\ref{xx}) gives
\begin{align}
q(x,t)=2ie^{ia\int_{-\infty}^x |q(x',t)|^2dx'}\lim_{\lambda\rightarrow \infty}(\lambda\mu)_{12}.\label{q1}
\end{align}

 The Lax pair  (\ref{kk1})-(\ref{kk2})  can be written into  a  full derivative form
\begin{align}
&d(e^{it\theta(x,t,\lambda)\widehat{\sigma}_3}\mu  )= e^{it\theta(x,t,\lambda)\widehat{\sigma}_3}e^{-\frac{ia}{2}\int_{-\infty}^x|q|^2(x',t)dx'\widehat{\sigma}_3}(H_1 dx +H_2dt)\mu.\label{qwtf}
\end{align}
The two solutions of equation (\ref{qwtf}) are
\begin{align}
&\mu_- =I+\int_{- \infty}^x e^{-i\lambda(a\lambda-2b)(x-y)\widehat{\sigma}_{3}}e^{-\frac{ia}{2}\int_{-\infty}^y|q|^2(x',t)dx'\widehat{\sigma}_3}H_1(y,t,\lambda)\mu_-(y,t)dy,\label{pioi}\\
&\mu_+ =I+\int_{ \infty}^x e^{-i\lambda(a\lambda-2b)(x-y)\widehat{\sigma}_{3}}e^{-\frac{ia}{2}\int_{-\infty}^y|q|^2(x',t)dx'\widehat{\sigma}_3}H_1(y,t,\lambda)\mu_+(y,t)dy.\label{pioi6}
\end{align}
We define two domains by
\begin{align}
D^{+} =\{ \lambda | (a \text{Re}\lambda-b)\text{ Im}\lambda>0\}, \ \  D^{-} =\{ \lambda | (a \text{Re}\lambda-b)\text{ Im}\lambda<0\}, \label{dom}
\end{align}
then   boundary of the  domains   $D^{+}$ and   $D^{-}$ is given by
\begin{equation}
\Sigma= \{ \lambda | (a \text{Re}\lambda-b)\text{ Im}\lambda= 0\}. \label{boun}
\end{equation}
\begin{figure}[H]
\begin{center}
\begin{tikzpicture}
\draw [fill=pink,ultra thick,pink] (1,0) rectangle (3,1.8);
\draw [fill=pink,ultra thick,pink] (1,0) rectangle (-1.3,-1.8 );
\draw [->](-2,0)--(3.5,0);
\draw [->](0,-2)--(0,2);
\draw [ ](1,-2)--(1,2);
 \node at (2, 1 )  {$D^+$};
\node at (4, 0 )  {$\text{Re }\lambda$};
\node at (0, 2.5 )  {$\text{Im }\lambda$};
\node at (1.3, -0.2 )  { $b/a$};
 \node at (-0.6, -1 )  {$D^+$};
\node at (2, -1 )  {$D^-$};
 \node at (-0.6, 1 )  {$D^-$};

\end{tikzpicture}
\end{center}
\caption{ Analytical domains $D^+$, $D^-$ and boundary $\Sigma$ corresponding to the mixed  NLS equation.}
\label{figure1}
 \end{figure}
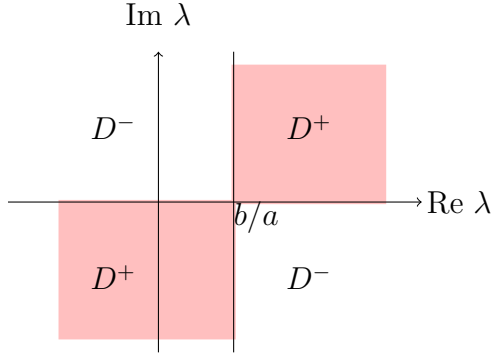

We denote $\mu_\pm=(\mu_{\pm,1}, \mu_{\pm,2})$. Starting from the integrated equation (\ref{pioi})-(\ref{pioi6}), by constructing the  Neumann series, it follows that

\begin{proposition} For  $q(x)  \in L^1(\mathbb{R} )$  and    $t\in \mathbb{R}^+$,  the eigenfunctions defined by (\ref{pioi}) exist and are unique.
Moreover,  $\mu_{-,1}$ and $\mu_{+,2}$ are analytic in $D^{+}$; $\mu_{-,2}$ and $\mu_{+,1}$ are analytic in $D^{-}$.
\end{proposition}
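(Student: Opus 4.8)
The plan is to read (\ref{pioi})--(\ref{pioi6}) as Volterra integral equations in $x$ at a fixed $t\in\mathbb{R}^+$ and to solve them by Picard iteration, extracting existence, uniqueness, and analyticity directly from the resulting Neumann series. The first step is to pass from the matrix equations to decoupled column equations. Using the adjoint identity $e^{c\widehat{\sigma}_3}M=e^{c\sigma_3}Me^{-c\sigma_3}$ and writing $\omega=\lambda(a\lambda-2b)$, the first column of the integrand in (\ref{pioi}) takes the form $\mathrm{diag}\left(1,\,e^{2i\omega(x-y)}\right)\widetilde{H}_1\mu_{-,1}$ and the second column the form $\mathrm{diag}\left(e^{-2i\omega(x-y)},\,1\right)\widetilde{H}_1\mu_{-,2}$, where $\widetilde{H}_1=e^{-\frac{ia}{2}\int_{-\infty}^y|q|^2\,\widehat{\sigma}_3}H_1$; the inner conjugation is unimodular and so does not affect any growth estimate. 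This isolates, column by column, the single scalar exponential that controls growth of the iteration.

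The decisive point is the sign of that exponent. With $\lambda=\xi+i\eta$ one computes $\mathrm{Im}\,\omega=\mathrm{Im}[\lambda(a\lambda-2b)]=2\eta(a\xi-b)$, so $|e^{2i\omega(x-y)}|=e^{-4(x-y)\eta(a\xi-b)}$. In (\ref{pioi}) the integration runs over $y\le x$, hence $x-y\ge0$, and this factor is $\le1$ exactly when $(a\xi-b)\eta\ge0$, that is on $\overline{D^+}$; the companion factor $e^{-2i\omega(x-y)}$ governing the second column is bounded on $\overline{D^-}$. Since (\ref{pioi6}) integrates over $y\ge x$, both conclusions reverse. This is precisely the mechanism producing the asserted regions: $\mu_{-,1},\mu_{+,2}$ analytic in $D^+$ and $\mu_{-,2},\mu_{+,1}$ analytic in $D^-$.

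With the exponentials bounded, I would set up the Neumann series $\mu_-=\sum_{n\ge0}\mu_-^{(n)}$, $\mu_-^{(0)}=I$, with $\mu_-^{(n+1)}$ obtained by applying the Volterra operator to $\mu_-^{(n)}$, and similarly for $\mu_+$. On any compact $K\subset D^+$ (resp.\ $D^-$) the controlling exponential is $\le1$ while $\|\widetilde{H}_1(y,\lambda)\|\lesssim C_K\left(|q(y)|+|q(y)|^2\right)$, which is integrable because $q_0\in H^{2,2}(\mathbb{R})\subset L^1(\mathbb{R})\cap L^2(\mathbb{R})$. The standard Volterra estimate then yields $\|\mu_-^{(n)}(x,\lambda)\|\le\frac{1}{n!}\left(\int_{-\infty}^x C_K(|q|+|q|^2)\,dy\right)^n$, so the series converges absolutely and uniformly on $\mathbb{R}\times K$; existence follows, and uniqueness follows by applying the same bound to the difference of two solutions, whose $n$-th Volterra iterate tends to $0$.

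Finally, for analyticity, each iterate $\mu_-^{(n)}(x,\lambda)$ depends analytically on $\lambda$ in the interior of $D^{\pm}$, since the integrand is analytic (the exponentials are entire and the coefficient $\widetilde{H}_1$ is polynomial in $\lambda$) and locally bounded, so Morera's theorem together with Fubini justifies analyticity of each integral; the locally uniform convergence of the series then transfers analyticity to the limits $\mu_{-,1},\mu_{-,2}$, and symmetrically to $\mu_{+,1},\mu_{+,2}$. I expect the main obstacle to be bookkeeping rather than hard analysis: one must carefully match the orientation of integration against the sign of $\mathrm{Im}[\lambda(a\lambda-2b)]$ to land each column in the correct domain, and one must use $q\in L^2$ (not merely $L^1$) to absorb the $|q|^2$ diagonal term hidden in $H_1$ — a condition guaranteed by $q_0\in H^{2,2}(\mathbb{R})$ but slightly stronger than the bare $L^1$ hypothesis stated.
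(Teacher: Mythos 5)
Your proposal is correct and follows essentially the same route as the paper, which gives no detailed argument but states only that the result follows ``by constructing the Neumann series'' from the Volterra equations (\ref{pioi})--(\ref{pioi6}) --- precisely what you carry out, with the sign computation $\mathrm{Im}[\lambda(a\lambda-2b)]=2\,\mathrm{Im}\lambda\,(a\,\mathrm{Re}\lambda-b)$ correctly matching each column to the domains $D^{\pm}$ of (\ref{dom}). Your closing observation that the $|q|^{2}$ term in $H_1$ needs $q\in L^{2}$ beyond the bare $L^{1}$ hypothesis is a legitimate refinement of the statement, and is harmless here since $q_0\in H^{2,2}(\mathbb{R})\subset L^{1}(\mathbb{R})\cap L^{2}(\mathbb{R})$.
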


Again by using  transformations (\ref{bianhuan})  and  (\ref{xx}),  we have
\begin{equation}
\mu_-(x,t,\lambda)=\mu_+(x,t,\lambda)e^{-it\theta(x,t,\lambda)\widehat{\sigma}_{3}}S(\lambda).\label{omega}
\end{equation}
where
$$
S(\lambda)=\begin{pmatrix}
s_{11}(\lambda)&s_{12}(\lambda)\\
s_{21}(\lambda)&s_{22}(\lambda)
\end{pmatrix}.
$$
\begin{proposition}  The eigenfunctions  $ \mu_\pm$ and  scattering matrix  $S(\lambda)$   satisfy  the symmetry relations
\begin{align}
&\mu_\pm(x, t, \lambda)=\sigma_1 \overline{\mu_\pm \left(x,t,\overline{\lambda}\right)} \sigma_1, \quad S(\lambda)=\sigma_1 \overline{S(\overline{\lambda})}\sigma_1,\label{dcx1}\\
&\mu_\pm(x,t,\lambda)=-\sigma_* \overline{\mu_\pm\left(x,t,2b/a-\overline{\lambda}\right)} \sigma_*, \quad S(\lambda)=-\sigma_* \overline{S\left(2b/a-\overline{\lambda}\right) }\sigma_*,\label{dcx2}
\end{align}
with
$$\sigma_{*}=\begin{pmatrix}
0 & 1 \\
-1 & 0
\end{pmatrix}, \ \
\sigma_1=\begin{pmatrix}
0 & 1 \\
1 & 0
\end{pmatrix}.
$$
\end{proposition}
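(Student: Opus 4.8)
The plan is to prove both symmetries by the same two-step mechanism: first establish the symmetry for the eigenfunctions $\mu_\pm$ directly from the Volterra integral equations (\ref{pioi})--(\ref{pioi6}) via the uniqueness asserted in Proposition 1, and then transport it to the scattering matrix $S(\lambda)$ through the scattering relation (\ref{omega}). Since (\ref{pioi})--(\ref{pioi6}) are $x$-integral equations at fixed $t$, with $\lambda$ entering only through $p(\lambda):=\lambda(a\lambda-2b)$ and $H_1=(a\lambda-b)Q-\tfrac{ia}{2}|q|^2\sigma_3$, the symmetry in $\lambda$ can be checked pointwise in $(x,t)$ using only the $H_1$-block, and the lower/upper limits $\mp\infty$ are left unchanged by the spectral involutions, so $\mu_-$-equations map to $\mu_-$-equations; the $t$-part $H_2$ is not needed.

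For the first relation, set $\tilde\mu_\pm(\lambda):=\sigma_1\overline{\mu_\pm(\bar\lambda)}\sigma_1$ and substitute $\lambda\mapsto\bar\lambda$ into (\ref{pioi})/(\ref{pioi6}), conjugate, and conjugate by $\sigma_1$. The identities I will use are $\sigma_1^2=I$, $\sigma_1\sigma_3\sigma_1=-\sigma_3$, $\sigma_1\overline{Q}\sigma_1=Q$, $\overline{a\bar\lambda-b}=a\lambda-b$, together with $\overline{p(\bar\lambda)}=p(\lambda)$ (real coefficients). Because $\sigma_1$ conjugation flips the sign carried by $\widehat{\sigma}_3$, the oscillatory kernel $e^{-ip(\bar\lambda)(x-y)\widehat{\sigma}_3}$ returns to $e^{-ip(\lambda)(x-y)\widehat{\sigma}_3}$ after conjugation, the $|q|^2$-exponential is reproduced verbatim, and $\sigma_1\overline{H_1(\bar\lambda)}\sigma_1=H_1(\lambda)$; since $\sigma_1\overline{I}\sigma_1=I$, the normalization is preserved. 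Hence $\tilde\mu_\pm$ solves the same Volterra equation as $\mu_\pm$, and uniqueness gives $\tilde\mu_\pm=\mu_\pm$.

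The second relation is handled identically but with the involution $\lambda\mapsto 2b/a-\bar\lambda$ and the candidate $\widehat\mu_\pm(\lambda):=-\sigma_*\overline{\mu_\pm(2b/a-\bar\lambda)}\sigma_*$. Here the relevant algebra is $\sigma_*^2=-I$, the anticommutation $\sigma_*\sigma_3=-\sigma_3\sigma_*$ (so $\sigma_*\sigma_3\sigma_*=\sigma_3$ while $\sigma_*\sigma_3\sigma_*^{-1}=-\sigma_3$), $\sigma_*\overline{Q}\sigma_*=Q$, and the two polynomial identities $\overline{p(2b/a-\bar\lambda)}=p(\lambda)$ and $\overline{a(2b/a-\bar\lambda)-b}=-(a\lambda-b)$. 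The overall minus sign in $\widehat\mu_\pm$ is forced by the normalization, since $-\sigma_*\overline{I}\sigma_*=-\sigma_*^{2}=I$. The sign produced by the spectral coefficient $a\lambda-b\mapsto-(a\lambda-b)$ inside $H_1$ combines with the anticommutation of $\sigma_*$ and $\sigma_3$ to reproduce $H_1(\lambda)$ exactly, so $\widehat\mu_\pm$ again satisfies (\ref{pioi})/(\ref{pioi6}) and uniqueness finishes this step.

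Finally, to obtain the symmetries of $S(\lambda)$ I will evaluate (\ref{omega}) at the transformed spectral parameter, apply the corresponding eigenfunction symmetry to both factors $\mu_-,\mu_+$, and use that the conjugation sends $e^{-it\theta\widehat{\sigma}_3}$ back to $e^{-it\theta\widehat{\sigma}_3}$ acting on the transformed $S$, owing to $\overline{\theta(\bar\lambda)}=\theta(\lambda)$ and $\overline{\theta(2b/a-\bar\lambda)}=\theta(\lambda)$; cancelling the invertible factor $\mu_+$ then reads off $S(\lambda)=\sigma_1\overline{S(\bar\lambda)}\sigma_1$ and $S(\lambda)=-\sigma_*\overline{S(2b/a-\bar\lambda)}\sigma_*$. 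The only delicate point, and the step I expect to require the most care, is the sign bookkeeping in the second symmetry: one must verify that the three sources of sign change---complex conjugation of $-i$, the replacement $a\lambda-b\mapsto-(a\lambda-b)$, and the anticommutation of $\sigma_*$ with $\sigma_3$ inside the $\widehat{\sigma}_3$ exponentials---cancel so that the kernel is reproduced with the correct (rather than the opposite) sign, which is exactly what singles out $-\sigma_*(\cdot)\sigma_*$ as the right symmetry operator.
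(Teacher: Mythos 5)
Your proposal is correct: all the algebraic identities you invoke ($\sigma_1\overline{Q}\sigma_1=Q$, $\sigma_*\overline{Q}\sigma_*=Q$, $\overline{p(\bar\lambda)}=\overline{p(2b/a-\bar\lambda)}=p(\lambda)$, $\overline{a(2b/a-\bar\lambda)-b}=-(a\lambda-b)$, and the sign cancellations forcing the $-\sigma_*(\cdot)\sigma_*$ normalization) check out, and the two-step mechanism --- uniqueness of the Volterra solutions of (\ref{pioi})--(\ref{pioi6}) followed by transfer to $S(\lambda)$ through (\ref{omega}) and the invertibility of $\mu_+$ --- is exactly the standard argument that the paper's own setup (Proposition 2.1 and equation (\ref{omega})) presupposes; the paper in fact states this proposition without proof.
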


\section{The construction of  a  RH problem}
\hspace*{\parindent}
We further use  eigenfunction   $\mu(x,t,z)$ to set up a  RH problem.  Define
\begin{equation}
M(x,t,\lambda)=\begin{cases}
(\frac{\mu_{-,1}}{s_{11}(\lambda)},\mu_{+,2}) ,\quad \lambda\in D^+,\\[6pt]
(\mu_{+,1},\frac{\mu_{-,2}}{s_{22}(\lambda)}),\quad\quad \lambda\in D^-,
\end{cases}
\end{equation}
then equation (\ref{omega})   leads to   the following Riemann-Hilbert problem:

\noindent {\bf RH problem 1.}  Find a matrix function $ M(x,t,\lambda)$ satisfying

(i) $ M(x,t,\lambda)$ is analytic in $ \mathbb{C}\backslash\Sigma $;

(ii) The boundary value $ M_{\pm}(x,t,\lambda) $ at $ \Sigma $ satisfies the jump condition
\begin{equation}
 M_+ (x,t,\lambda)  =  M_-(x,t,\lambda)
V(x,t,\lambda),\quad \lambda \in \Sigma, \label{muE}
\end{equation}
and the jump matrix $ V(\lambda) $ is given  by
\begin{equation}
V(x,t,\lambda)=\begin{pmatrix}
1-r(\lambda)\overline{r(\bar{\lambda})}&-\overline{r(\bar{\lambda})}e^{2it\theta(\lambda)} \\
r(\lambda)e^{-2it\theta(\lambda)} &1
\end{pmatrix},\ \ r(\lambda)=\frac{s_{21}(\lambda)}{s_{11}(\lambda)},
\end{equation}
where
\begin{equation}
\theta(x,t,\lambda)=2\lambda^{2}(a\lambda-2b)^{2}+\frac{x}{t}\lambda(a\lambda-2b); \label{ewew}
\end{equation}

(iii)  Asymptotic behavior
\begin{equation}
M(x,t,\lambda)=I+O( \lambda^{-1} ),\qquad as\quad  \lambda\rightarrow\infty.
\end{equation}
The  solution for the initial-value problem (\ref{MNLS}) can be expressed in terms of the  RH problem
\begin{align}
q(x,t)=2ie^{ia\int_{-\infty}^x |q(x',t)|^2dx'}\lim_{\lambda\rightarrow\infty }[\lambda M(x,t,\lambda)]_{12}.\label{q1}
\end{align}

We set
\begin{equation}
m(x,t)=\lim_{\lambda\rightarrow\infty }[\lambda M(x,t,\lambda)]_{12},\label{4.7}
\end{equation}
then from  (\ref{q1}) and its complex conjugate we obtain
\begin{equation}
|q|^{2}=4|m|^2.
\end{equation}
Thus (\ref{q1}) becomes
\begin{equation}
q(x,t)=2ie^{4ia\int_{-\infty}^x|m|^2dx'}m(x,t).\label{4.9}
\end{equation}

Let $z=\lambda(a\lambda-2b)$ and define
\begin{equation}
N(x,t,z)=(a\lambda-2b)^{-\frac{\widehat{\sigma}_{3}}{2}}M(x,t,\lambda),  \label{4.10}
\end{equation}
then we translate the RH problem 1  to  a new RH  problem:

\noindent {\bf RH problem 2.}  Find a matrix function $ N(x,t,z)$ satisfying

(i) $ N(x,t,z)$ is analytic in $ \mathbb{C}\backslash \mathbb{R} $,

(ii) The boundary value $ N_{\pm}(x,t,z) $ at $ \Sigma $ satisfies the jump condition
\begin{equation}
N_+(x,t,z)=N_-(x,t,z)V_{N},\quad z\in \mathbb{R} \label{muE},
\end{equation}
where
$$
V_{N}=e^{-it\theta\widehat{\sigma}_{3}}\begin{pmatrix}
1-z\rho_{1}(z)\rho_{2}(z)  &-\rho_{1}(z)\\
z\rho_{2}(z)&1
\end{pmatrix}.
$$

\begin{equation}
\theta (z)=\frac{x}{t}z+2z^{2},\label{theta}
\end{equation}
and two  reflection  coefficients   are   given by
\begin{align}
\rho_{1}(z)=\frac{\overline{r(\bar{\lambda})}}{a\lambda-2b}, \ \ \rho_{2}(z)=\frac{r(\lambda)}{\lambda}.\label{reflection}
\end{align}

(iii) Asymptotic behavior
\begin{equation}
N(x,t,z)=I+O(\frac{1}{z}),\quad as\quad  z\rightarrow\infty.
\end{equation}
From   (\ref{theta}),  we get the  stationary point
\begin{equation}
z_{0}=-\frac{x}{4t},
\end{equation}
and  two  steepest descent lines
$$
L=\{z=z_{0}+ue^{i\pi/4},\quad u\geq0\}\cup\{z=z_{0}+ue^{5i\pi/4},\quad u\geq0\},
$$
$$
\bar{L}=\{z=z_{0}+ue^{-i\pi/4},\quad u\geq0\}\cup\{z=z_{0}+ue^{3i\pi/4},\quad u\geq0\}.
$$
\textbf{Remark 4.1.}  Comparing with the RH problem 1, the RH problem 2  possesses three
special properties:  1)  It is a RH problem  on  a  real axis; 2) it possesses two reflection
coefficients $\rho_1$ and $\rho_2$;  3) Its phase factor $ \theta(z)  $ has only one stationary point $z_0$.

Next based on the RH problem 2,    we   analyze the asymptotic behavior of the solution of the defocusing mixed NLS equation by
 using the  $\bar\partial$ steepest descent method.

\section{Triangular factorizations of jump matrix}
\hspace*{\parindent}
We decompose  the jump matrix $V_{N}$  into   appropriate  upper/lower triangular factorizations  which can help us to  make   continuous extension
of  the RH problem. It can be shown that the matrix $V_{N}$ admits the following two  triangular factorizations:
\begin{equation}\nonumber
V_N( z)=\begin{cases}
 \begin{pmatrix}
1&-\rho_1(z)e^{-2it\theta}\\
0&1
\end{pmatrix}
\begin{pmatrix}
1&0\\
z\rho_2(z)e^{2it\theta}&1
\end{pmatrix}\triangleq W_LW_R,\quad z>z_0\\ \\
\begin{pmatrix}
1&0\\
\frac{z\rho_{2}(z)}{1-z\rho_1(z)\rho_2(z)}e^{2it\theta}&1
\end{pmatrix}
\begin{pmatrix}
1-z\rho_1(z)\rho_2(z)&0\\
0&\frac{1}{1-z\rho_1(z)\rho_2(z)}
\end{pmatrix}
\begin{pmatrix}
1&\frac{-\rho_1(z)}{1-z\rho_1(z)\rho_2(z)}e^{-2it\theta}\\
0&1
\end{pmatrix}\\ \\
\triangleq U_LU_0U_R, \quad z<z_0.
\end{cases}
\end{equation}
which is shown in figure \ref{fig2}.
\begin{figure}[H]
\begin{center}
\begin{tikzpicture}
\draw[thick,  -> ] (-3,0)--(3,0);
\draw [fill] (0,0) circle [radius=0.03];
\node  [below]  at (0,0) {$z_0$};
\node  [below]  at (3.2,0.2) {$\mathbb{R}$};
\node [thick ] [below]  at (1.5,0.6) {\footnotesize $ V_N=W_LW_R$};
\node [thick ] [below]  at (-1.5,0.6) {\footnotesize $ V_N=U_LU_0U_R$};
\end{tikzpicture}
\end{center}
\caption{The jump matrices of $N$.}
\label{fig2}
\end{figure}
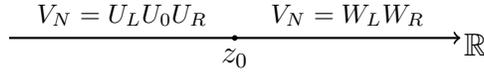

To remove the diagonal matrix $U_0$ across $(-\infty,z_0]$ in the second factorization, we introduce  a scalar RH problem
\begin{equation}
\begin{cases}
\delta_{+}(z)=\delta_{-}(z)(1-z\rho_{1}(z)\rho_{2}(z)),& z<z_{0},\\
\delta_{+}(z)=\delta_{-}(z), & z>z_{0},\\
\delta(z)\rightarrow 1,& as\ z\rightarrow\infty, \label{mp}
\end{cases}
\end{equation}
which  has a solution
\begin{equation}
\begin{aligned}
\delta(z)&=\exp\left[\frac{1}{2\pi i}\int_{-\infty}^{z_{0}}\frac{\mathrm{ln}(1-\xi\rho_{1}(\xi)\rho_{2}(\xi))}{\xi-z}d\xi\right]=\exp\left( i\int_{-\infty}^{z_{0}}\frac{ \nu(\xi)}{\xi-z}d\xi\right),\label{dz}
\end{aligned}
\end{equation}
where
$$\nu(z)=-\frac{1}{2\pi } \mathrm{ln}[1-z\rho_{1}(z)\rho_{2}(z)].$$

We write  (\ref{dz})   in the form
\begin{equation}
\begin{aligned}
\delta(z)&=\exp \left(i \int_{z_0-1}^{z_0} \frac{\nu(z_0)}{\xi-z} \mathrm{d}\xi\right)\exp \left(i \int_{-\infty}^{z_0} \frac{\nu(\xi)-\chi(\xi)\nu(z_0)}{\xi-z}  \mathrm{d}\xi\right)\\
&=(z-z_0)^{i \nu(z_0)} e^{i \beta(z ,z_0)},
\end{aligned}
\end{equation}
with
\begin{equation}
\beta(z,z_0)=-\nu(z_0) \log (z-z_0+1)+\int_{-\infty}^{z_0} \frac{\nu(\xi)-\chi(\xi)\nu(z_0)}{\xi-z} \mathrm{d}\xi,
\end{equation}
where $\chi$ is the characteristic function of the interval $(z_0-1, z_0) .$ We choose the branch of the logarithm with $-\pi<\arg (z)<\pi$ as $z \rightarrow z_0$ for $z-z_0=r e^{i\phi}$ with $-\pi<\phi<\pi$ and implied constants independent of $z_0 \in \mathbb{R}$.

By using  (\ref{omega}) and (\ref{dcx1}), we get
$$ |r(\lambda)|^2+1/|s_{11}(\lambda)|^2=1,$$
which implies that   $ |z\rho_1(z)\rho_2(z)|=|r(\lambda)|<1$. In a similar way \cite{lta29}, we can  show that

\begin{thm}  Suppose that $r(\lambda)=z\rho_1(z)\rho_2(z)\in L^2(\mathbb{R})\cap L^{\infty}(\mathbb{R})$ and $\|r\|_{L^{\infty}} \leq c<1$, the function $ \delta(z) $ has properties:

 (i)  \ $ \delta(z)  $ is  uniformly bounded in $ z $,   namely
\begin{equation}
 (1-||r||_{L^\infty})^{1/2} \leq|\delta(z)| \leq  (1-||r||_{L^\infty})^{-1/2}.
\end{equation}

(ii)  \  $ \delta(z)\overline{\delta(\bar{z})}=1.$

(iii)\   $ \delta(z)$  admits  asymptotic expansion
\begin{equation}
\delta(z)=1+\frac{i}{z} \int_{-\infty}^{z_0} \nu(\xi)\mathrm{d}\xi+\mathcal{O}\left(\frac{1}{z^{2}}\right), \ z \rightarrow \infty.
\end{equation}

(iv) \  Asymptotics  along any ray of the form $z=z_0+e^{i\phi} \mathbb{R}^{+}$ with $-\pi <\phi<\pi$   as $z \rightarrow z_0$
\begin{equation}
\left|\delta(z)-e^{i\beta(z_0,z_0)}(z_0-z)^{-i \nu(z_0)}\right| \lesssim-|z-z_0| \log |z-z_0|.\label{bds}
\end{equation}

\end{thm}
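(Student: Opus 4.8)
The plan is to treat (i)--(iii) as direct consequences of the Cauchy-integral representation and to concentrate the real work on the local estimate (iv). The starting observation is that on $\mathbb{R}$ one has $z\rho_1(z)\rho_2(z)=\overline{r(\bar\lambda)}\,r(\lambda)=|r(\lambda)|^2\in[0,c^2]$, so $\nu(\xi)=-\frac{1}{2\pi}\log(1-|r|^2)$ is a real, nonnegative function of the real variable $\xi$, and $\log\delta(z)=i\int_{-\infty}^{z_0}\frac{\nu(\xi)}{\xi-z}\,d\xi$ is the Cauchy transform of a real density supported on $(-\infty,z_0]$. This reality is what drives (i) and (ii).

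For (i) I would take the real part of $\log\delta$. Writing $z=x+iy$ and using $\mathrm{Re}\,\frac{1}{2\pi i(\xi-z)}=\frac{y}{2\pi|\xi-z|^2}$, I get $\log|\delta(z)|=\frac{1}{2\pi}\int_{-\infty}^{z_0}\log(1-|r|^2)\frac{y}{|\xi-z|^2}\,d\xi$. Since $\log(1-\|r\|_{L^\infty}^2)\le\log(1-|r|^2)\le 0$ and $\int_{-\infty}^{z_0}\frac{|y|}{|\xi-z|^2}\,d\xi\le\int_{\mathbb{R}}\frac{|y|}{|\xi-z|^2}\,d\xi=\pi$, the sign of $y$ fixes the direction of the inequality and yields a two-sided bound, in fact with $(1-\|r\|_{L^\infty}^2)^{\pm1/2}$, which is sharper than and implies the stated $(1-\|r\|_{L^\infty})^{\pm1/2}$. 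Property (ii) is immediate from the reality of $\nu$: conjugating the representation gives $\overline{\delta(\bar z)}=\exp\bigl(-i\int_{-\infty}^{z_0}\frac{\nu(\xi)}{\xi-z}\,d\xi\bigr)=\delta(z)^{-1}$. For (iii) I would insert the geometric expansion $\frac{1}{\xi-z}=-\sum_{n\ge0}\xi^n z^{-n-1}$ (legitimate since $\nu$ is integrable and decays) and integrate term by term, so that $\log\delta(z)=\frac{i}{z}\int_{-\infty}^{z_0}\nu(\xi)\,d\xi+O(z^{-2})$, and exponentiation gives the claimed first-order expansion.

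The substance is (iv). I would start from the exact factorization already derived above, $\delta(z)=(z-z_0)^{i\nu(z_0)}e^{i\beta(z,z_0)}$, in which the singular factor is explicit; with the fixed branch $-\pi<\arg(z-z_0)<\pi$ this factor is bounded (its modulus is $e^{-\nu(z_0)\arg(z-z_0)}$) and, by the chosen branch convention, coincides with $(z_0-z)^{-i\nu(z_0)}$. Hence the difference to be estimated collapses to $|(z-z_0)^{i\nu(z_0)}|\,|e^{i\beta(z,z_0)}-e^{i\beta(z_0,z_0)}|$. Since $|\delta|$, and therefore $|e^{i\beta}|$, are bounded by (i), the inequality $|e^{iw}-e^{iw'}|\lesssim|w-w'|$ reduces everything to $|\beta(z,z_0)-\beta(z_0,z_0)|\lesssim -|z-z_0|\log|z-z_0|$. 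Of the two terms in $\beta$, the logarithmic one contributes $-\nu(z_0)\log(z-z_0+1)=O(|z-z_0|)$, while the integral term, after using $\frac{1}{\xi-z}-\frac{1}{\xi-z_0}=\frac{z-z_0}{(\xi-z)(\xi-z_0)}$, becomes $(z-z_0)\int_{-\infty}^{z_0}\frac{\nu(\xi)-\chi(\xi)\nu(z_0)}{(\xi-z)(\xi-z_0)}\,d\xi$.

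This last integral, split at $z_0-1$, is where the difficulty lies. On $(-\infty,z_0-1)$ the kernel stays bounded away from its singularity for $z$ near $z_0$, and $\nu\in L^1$ makes this piece $O(1)$. On $(z_0-1,z_0)$, where $\chi\equiv1$, the numerator is $\nu(\xi)-\nu(z_0)$; here I would invoke Lipschitz continuity of $\nu$ near $z_0$ --- which propagates from the $H^{2,2}$ regularity of the data to $r$ and then to $\nu$, since $1-|r|^2\ge1-c^2>0$ keeps $\log$ smooth --- to bound $|\nu(\xi)-\nu(z_0)|\lesssim|\xi-z_0|$. This cancels the factor $(\xi-z_0)^{-1}$ and leaves $\int_{z_0-1}^{z_0}\frac{d\xi}{|\xi-z|}$, which I expect to bound by $-\log|z-z_0|$ uniformly in the ray angle $\phi\in(-\pi,\pi)$; multiplying back by $|z-z_0|$ produces the asserted rate. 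The main obstacle is precisely this near-diagonal estimate: controlling $\int_{z_0-1}^{z_0}|\xi-z|^{-1}\,d\xi$ by a constant independent of $z_0$ and of the direction of approach, together with securing the Lipschitz bound on $\nu$ --- a mere H\"older-$\tfrac12$ control (all one gets from $r\in H^1$) would only yield the inferior rate $|z-z_0|^{1/2}$, so the improvement to $-|z-z_0|\log|z-z_0|$ is exactly what the extra $H^{2,2}$ regularity buys.
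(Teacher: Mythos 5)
Your route is the standard one from the $\bar\partial$ literature that the paper itself invokes without proof (the paper only writes ``In a similar way [Dieng--McLaughlin] we can show that'' and states the theorem), so there is no in-paper argument to diverge from: reality and nonnegativity of $\nu$ for (i)--(ii), term-by-term expansion of the Cauchy transform for (iii), and the factorization $\delta(z)=(z-z_0)^{i\nu(z_0)}e^{i\beta(z,z_0)}$ together with the splitting of $\beta(z,z_0)-\beta(z_0,z_0)$ at $z_0-1$ for (iv). Parts (i) and (ii) are correct (your bound with $(1-\|r\|_{L^\infty}^2)^{\pm 1/2}$ indeed implies the stated one), and the substantive part of (iv) --- the far piece controlled by $\nu\in L^1$, the near piece controlled by Lipschitz continuity of $\nu$, and the identification that $H^{2,2}$ regularity (rather than the H\"older-$\tfrac12$ control coming from $H^1$) is what upgrades the rate from $|z-z_0|^{1/2}$ to $-|z-z_0|\log|z-z_0|$ --- is exactly the right mechanism.

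However, one step is genuinely false: your claim that, with the branch $-\pi<\arg(z-z_0)<\pi$, the factor $(z-z_0)^{i\nu(z_0)}$ ``coincides with $(z_0-z)^{-i\nu(z_0)}$.'' It does not, under any fixed branch: since $z_0-z=e^{\pm i\pi}(z-z_0)$, one has $(z_0-z)^{-i\nu(z_0)}=e^{\pm\pi\nu(z_0)}(z-z_0)^{-i\nu(z_0)}$, so the ratio of your two quantities is $e^{\mp\pi\nu(z_0)}(z-z_0)^{2i\nu(z_0)}$, whose phase $2\nu(z_0)\log|z-z_0|$ oscillates without limit as $z\to z_0$. What your argument actually establishes is $\left|\delta(z)-e^{i\beta(z_0,z_0)}(z-z_0)^{i\nu(z_0)}\right|\lesssim-|z-z_0|\log|z-z_0|$, which is the form consistent with the factorization displayed just above the theorem (and with the cited literature); the inequality as printed, with $(z_0-z)^{-i\nu(z_0)}$, cannot hold when $\nu(z_0)\neq0$ --- along any ray with $\phi\neq\pm\pi/2$ the two terms do not even have equal moduli in the limit ($e^{-\nu\phi}$ versus $e^{\nu(\phi\mp\pi)}$), so the left side tends to a positive constant. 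The defect originates in the paper's statement, but a proof should expose that discrepancy rather than bridge it with a false identity. Two smaller points: in (iii), your own expansion $\frac{1}{\xi-z}=-\sum_{n\geq0}\xi^nz^{-n-1}$ yields $\delta(z)=1-\frac{i}{z}\int_{-\infty}^{z_0}\nu(\xi)\,\mathrm{d}\xi+\mathcal{O}(z^{-2})$, with a minus sign, which you silently flipped to match the stated ``$+$'' (again the statement's sign appears to be the typo, but your derivation contradicts your conclusion); and the bound $\int_{z_0-1}^{z_0}|\xi-z|^{-1}\mathrm{d}\xi\lesssim-\log|z-z_0|$ is not uniform in $\phi\in(-\pi,\pi)$ --- the constant degenerates like $-\log|\sin\phi|$ as $\phi\to\pm\pi$ --- though it is uniform on $|\phi|\leq\pi-\epsilon$ and per-ray validity is all the theorem asserts.
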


Making  a transformation
\begin{equation}
N^{(1)}=N\delta^{-\sigma_3},\label{delta}
\end{equation}
we get  the following RH problem.\\
\textbf{RH problem 3.}
Find an analytic function $N^{(1)}$  with the following properties:
$N^{(1)}$ satisfies the new Riemann-Hilbert problem:

(i)\quad $ N^{(1)}(x,t,z)$ is analytic in $ \mathbb{C}\backslash \mathbb{R} $;

(ii)\quad The boundary value $ N^{(1)}(x,t,z)$ at $\mathbb{R} $ satisfies the jump condition
\begin{equation}
N^{(1)}_+(x,t,z) =N^{(1)}_-(x,t,z)
V_{N}^{(1)}(x,t,z),\quad z\in \mathbb{R}, \label{mup}
\end{equation}
where the jump matrix $ V_{N}^{(1)}(z) $ is given   by
\begin{equation}
V_{N}^{(1)}=\begin{cases}
\begin{pmatrix}
1&-\rho_{1}(z)\delta_{+}^{2}e^{-2it\theta}\\
0&1
\end{pmatrix}
\begin{pmatrix}
1&0\\
z\rho_{2}(z)\delta_{-}^{-2}e^{2it\theta}&1
\end{pmatrix}\triangleq G_LG_R,\qquad\quad\quad z>z_{0},\\[0.5cm]
\begin{pmatrix}
1&0\\
\frac{z\rho_{2}(z)}{1-z\rho_{1}(z)\rho_{2}(z)}\delta_{-}^{-2}e^{2it\theta}&1
\end{pmatrix}
\begin{pmatrix}
1&\frac{-\rho_{1}(z)}{1-z\rho_{1}(z)\rho_{2}(z)}\delta_{+}^{2}e^{-2it\theta}\\
0&1
\end{pmatrix}\triangleq  H_LH_R,\qquad z<z_{0},
\end{cases}\label{VV}
\end{equation}
see Figure \ref{fig3}.

(iii)  Asymptotic condition
\begin{equation}
N^{(1)}(x,t,z)=I+O(z^{-1}),\qquad as\quad  z\rightarrow\infty.
\end{equation}

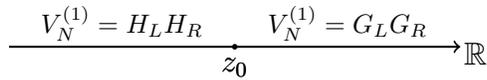
\begin{figure}[H]
\begin{center}
\begin{tikzpicture}
\draw[thick,  -> ] (-3,0)--(3,0);
\node  [below]  at (0,0) {$z_0$};
\draw [fill] (0,0) circle [radius=0.03];
\node  [below]  at (0,0) {$z_0$};
\node  [below]  at (3.2,0.2) {$\mathbb{R}$};
\node [thick ] [below]  at (1.5,0.7) {\footnotesize $ V^{(1)}_N=G_LG_R$};
\node [thick ] [below]  at (-1.5,0.7) {\footnotesize $ V^{(1)}_N=H_LH_R$};
\end{tikzpicture}
\end{center}
\caption{The jump matrix of $N^{(1)}(z)$.}
\label{fig3}
\end{figure}

\section{ A  hybrid $\overline{\partial}$ problem  }\label{sec6}
\hspace*{\parindent}
We continuously  extend the scattering data    in the jump matrix $V_{N}^{(1)}$ by the following way:

(1)   $z\rho_2(z)$  is extended to  the sector $\Lambda_1: \{z:\arg z\in(0,\pi/4)\}$,

(2)  $-\frac{\rho_1(z)}{1-z\rho_1(z)\rho_2(z)}$ is extended to  the sector $\Lambda_3: \{z:\arg z\in(3\pi/4,\pi)\}$,

(3)   $\frac{z\rho_2(z)}{1-z\rho_1(z)\rho_2(z)}$ is extended to  the sector $\Lambda_4: \{z:\arg z\in(\pi,5\pi/4)\}$,

(4)   $-\rho_1(z)$ is extended to   the sector $\Lambda_6: \{z:\arg z\in(7\pi/4,2\pi)\}$.

\begin{lemma}
   There exist function  $R_j$: $\Lambda_j\rightarrow \mathbb{C}$, $j=1,3,4,6$  with boundary values
\begin{align}
&R_1(z)=\begin{cases}
z\rho_2(z)\delta(z)^{-2},& z\in (z_0, \infty),\\[5pt]
z_0\rho_2(z_0)\delta_0(z_0)^{-2}(z-z_0)^{-2i\nu},&  z\in \Sigma_1,
\end{cases}\label{R1}\\
&R_3(z)=\begin{cases}
-\delta_+^2\frac{\rho_1(z)}{1-z\rho_1(z)\rho_2(z)},& z\in (-\infty, z_0),\\[5pt]
-\frac{\rho_1(z_0)}{1-z_0\rho_1(z_0)\rho_2(z_0)}\delta_0(z_0)^2(z-z_0)^{2i\nu},& z\in \Sigma_2,
\end{cases}\label{R3}\\
&R_4(z)=\begin{cases}
\delta_-^{-2}\frac{z\rho_2(z)}{1-z\rho_1(z)\rho_2(z)},& z\in (-\infty, z_0),\\[5pt]
\frac{z_0\rho_2(z_0)}{1-z_0\rho_1(z_0)\rho_2(z_0)}\delta_0(z_0)^{-2}(z-z_0)^{-2i\nu},& z\in \Sigma_3,
\end{cases}\label{R4}\\
&R_6(z)=\begin{cases}
-\rho_1(z)\delta(z)^2,& z\in (z_0,\infty),\\[5pt]
-\rho_1(z_0)\delta_0(z_0)^{2}(z-z_0)^{2i\nu},& z\in \Sigma_4,
\end{cases} \label{R6}
\end{align}
moreover, $R_j$ admit the following estimates
\begin{align}
&|\bar{\partial}R_j|\leq c_1|z-z_0|^{-\frac{1}{2}}+c_2|p'_j(Re z)|, \quad j=1,3,4,6,\nonumber\\
& \bar{\partial}R_j =0, \ j=2, 5.\nonumber
\end{align}
where  
 \begin{align}
&p_1(z)=z\rho_2(z), \ p_3(z)=-\frac{\rho_1(z)}{1-z\rho_1(z)\rho_2(z)},\nonumber\\
& p_4(z)=\frac{z\rho_2(z)}{1-z\rho_1(z)\rho_2(z)}, \ p_6(z)=-\rho_1(z).\nonumber
\end{align}

\end{lemma}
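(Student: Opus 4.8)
The plan is to build each $R_j$ $(j=1,3,4,6)$ by an explicit interpolation inside the sector $\Lambda_j$ between the two prescribed boundary data: the full scattering datum $p_j(z)$, dressed by the appropriate power of $\delta(z)$, on the real-axis edge, and its \emph{frozen} value $p_j(z_0)$, dressed by the model factor $\delta_0(z_0)^{\mp2}(z-z_0)^{\mp2i\nu}$, on the steepest-descent ray $\Sigma_j$. Writing $z-z_0=\rho e^{i\phi}$ and letting $\mathcal{K}(\phi)$ be a smooth cutoff that vanishes on the real axis and equals $1$ on $\Sigma_j$, I would set, for instance,
$$R_1(z)=\bigl(1-\mathcal{K}(\phi)\bigr)\,p_1(\mathrm{Re}\,z)\,\delta(z)^{-2}+\mathcal{K}(\phi)\,p_1(z_0)\,\delta_0(z_0)^{-2}(z-z_0)^{-2i\nu},$$
and analogously for $j=3,4,6$ with the correct sign of the exponent, the replacement $\delta\to\delta^{-1}$, and the denominator $1-z\rho_1\rho_2$; by inspection this reproduces (\ref{R1})--(\ref{R6}) exactly on both edges. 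In the two remaining sectors $\Lambda_2,\Lambda_5$, where the oscillatory factor $e^{\pm2it\theta}$ already provides decay and no scattering datum needs continuation, I would take the extension analytic (indeed identically $0$), so that $\bar{\partial}R_2=\bar{\partial}R_5=0$ is immediate.

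Next I would compute $\bar{\partial}R_j$ in polar coordinates via $\bar{\partial}=\tfrac12 e^{i\phi}\bigl(\partial_\rho+\tfrac{i}{\rho}\partial_\phi\bigr)$. Since $\delta(z)^{-2}$, $\delta_0(z_0)^{-2}(z-z_0)^{-2i\nu}$ and all power-law factors are analytic off the cut, $\bar{\partial}$ annihilates them and acts only on $p_j(\mathrm{Re}\,z)$ and on the cutoff $\mathcal{K}(\phi)$. This splits $\bar{\partial}R_j$ into two clean contributions: one from $\bar{\partial}\,p_j(\mathrm{Re}\,z)=\tfrac12 p_j'(\mathrm{Re}\,z)$ (using $\bar{\partial}\,\mathrm{Re}\,z=\tfrac12$), which is controlled by $|p_j'(\mathrm{Re}\,z)|$ up to the uniformly bounded factor $\delta^{\mp2}$ from Theorem 4.1(i), producing the term $c_2|p_j'(\mathrm{Re}\,z)|$; and one from $\bar{\partial}\mathcal{K}=\tfrac{i}{2\rho}e^{i\phi}\mathcal{K}'(\phi)$, weighted by the difference $B-A$ of the two boundary expressions, for which I must establish $|B-A|\lesssim|z-z_0|^{1/2}$.

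The crux is therefore that estimate for $B-A=p_j(z_0)\delta_0(z_0)^{\mp2}(z-z_0)^{\mp2i\nu}-p_j(\mathrm{Re}\,z)\delta(z)^{\mp2}$, which I would split as $p_j(z_0)\bigl[\delta_0(z_0)^{\mp2}(z-z_0)^{\mp2i\nu}-\delta(z)^{\mp2}\bigr]+\bigl[p_j(z_0)-p_j(\mathrm{Re}\,z)\bigr]\delta(z)^{\mp2}$. For the first bracket, Theorem 4.1(iv) gives a bound $\lesssim -|z-z_0|\log|z-z_0|$, which is $o(|z-z_0|^{1/2})$ as $z\to z_0$; for the second, the decisive input is that $q_0\in H^{2,2}(\mathbb{R})$ forces the reflection coefficient $r$ into $H^1(\mathbb{R})$, whence each $p_j$ (an algebraic combination of $r$ with $1-z\rho_1\rho_2=1-|r|^2\ge 1-c^2>0$ bounded below) also lies in $H^1$ and, by the Sobolev embedding $H^1(\mathbb{R})\hookrightarrow C^{1/2}(\mathbb{R})$, is Hölder-$1/2$, so $|p_j(z_0)-p_j(\mathrm{Re}\,z)|\lesssim|\mathrm{Re}\,z-z_0|^{1/2}\le|z-z_0|^{1/2}$. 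Multiplying by $|\bar{\partial}\mathcal{K}|\lesssim\rho^{-1}$ then yields the term $c_1|z-z_0|^{-1/2}$, and since $|B-A|$ stays bounded while $\bar{\partial}\mathcal{K}=O(\rho^{-1})$ for large $\rho$, the same power $\rho^{-1/2}$ dominates the whole sector.

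I expect the main obstacle to be exactly this Hölder step: transferring the $H^1$ regularity of the reflection coefficient to a pointwise $1/2$-Hölder modulus and matching it against the $\rho^{-1}$ singularity of the angular cutoff so that the two net to the borderline power $\rho^{-1/2}$. The second delicate point is keeping the implied constants uniform in $z_0\in\mathbb{R}$; here the $z_0$-independent bounds in Theorem 4.1(i),(iv) together with the uniform lower bound $1-|r|^2\ge1-c^2$ are precisely what make $c_1,c_2$ independent of $z_0$. The cases $j=3,4,6$ are identical modulo the sign of the exponent, the substitution $\delta\to\delta^{-1}$, and the harmless bounded factor $(1-z\rho_1\rho_2)^{-1}$, and the vanishing $\bar{\partial}R_j=0$ for $j=2,5$ follows from the analytic choice made there.
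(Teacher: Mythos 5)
Your proposal is correct and follows essentially the same route as the paper: the paper also constructs $R_1$ by angular interpolation between the two boundary data (using the explicit cutoff $\cos 2\varphi$ where you use a generic $\mathcal{K}(\phi)$), computes $\bar\partial$ in polar coordinates so that only the cutoff and $p_1(\mathrm{Re}\,z)$ are hit, and splits the boundary-value difference into the increment $|p_1(z)-p_1(z_0)|\leq \|p_1'\|_{L^2}|z-z_0|^{1/2}$ (Cauchy--Schwarz, which is exactly your $H^1\hookrightarrow C^{1/2}$ H\"older step) plus the discrepancy between $\delta$ and the frozen model, controlled by the $\beta$-estimates underlying Theorem 4.1(iv). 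Apart from the cosmetic choices of cutoff function and of where the $\delta$-dressing is carried in the interpolation, the two arguments coincide.
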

\begin{proof}
We only give the proof for $R_1$, the  others  can be proved in a similar  way. Define $f_{1}(z)$ on $\Lambda_{1}$ by
\begin{equation}
f_{1}(z)=p_{1}(z_0) \delta_{0}(z_0)^{-2} (z-z_0)^{-2i\nu(z_0)} \delta(z)^{2},\label{f1}
\end{equation}
Denote $z=u+iv=z_0+\varrho e^{i\varphi} $, then we have
\begin{equation}
\varrho =|z-z_0|, \ \ \bar\partial =\frac{1}{2}e^{i\varphi}( \partial_{\varrho}+i\varrho^{-1} \partial_{\varphi}).\label{popp}
\end{equation}
Define
\begin{equation}
R_1(u,v)= p_1(u) \cos2\varphi+(1-\cos2\varphi)f_1(u+iv),
\end{equation}
it follows  from  (\ref{R1})   that
\begin{equation}
\bar{\partial}R_1=(p_1-f_1) \bar{\partial} \cos2\varphi +\frac{1}{2}\cos2\varphi  p'_1(u),
\end{equation}
further by using (\ref{popp}), we obtain
\begin{equation}
|\bar{\partial}R_1|\leq\frac{c_1}{|z-z_0|}[|p_1-p_1(z_0)|+|p_1(z_0)-f_1|]+c_2|p'_1(u)|.
\end{equation}
While
\begin{equation}
|p_1(z)-p_1(z_0)|=|\int_{z_0}^zp_1'(s) {ds}|\leq  ||p_1||_{L^2((z,z_0))}\cdot|z-z_0|^{1/2}.
\end{equation}
From (\ref{f1}),  we have
\begin{align}
p_1(z_{0})-f_{1}=&p_1(z_{0})-p_1(z_{0}) \exp [2i\nu((z-z_{0}) \ln (z-z_{0})-(z-z_{0}+1) \ln (z-z_{0}+1))]\nonumber\\
&\times  \exp [2(\beta(z, z_{0})-\beta(z_{0}, z_{0}))].\nonumber
\end{align}
Noticing that in  $\Lambda_1$,
\begin{equation}
|\beta(z,z_0)-\beta(z_0,z_0)|=O(\sqrt{z-z_0}),\nonumber
\end{equation}
and
\begin{equation}
|\nu(z_0)\ln(z-z_0)|\leq O(\sqrt{z-z_0}).\nonumber
\end{equation}
Therefore
\begin{equation}
\begin{aligned}
|p_1(z_0)-f_1|&=p_1(z_0)\{1-\exp [O(\sqrt{z-z_0})]\}\\
&=p_1(z_0)\{O(\sqrt{z-z_0})\}.
\end{aligned}
\end{equation}
Combining these estimates yields
\begin{equation}
|\bar{\partial}R_1|\leq c_1|z-z_0|^{-\frac{1}{2}}+c_2|p'_1|.\label{dbarR}
\end{equation}
\end{proof}

We use $R_j$  obtained above  to define a new unknown function
\begin{equation}
N^{(2)}=N^{(1)}(z)\mathcal{R}^{(2)}(z) \label{RT}
\end{equation}
where
\begin{equation}
\mathcal{R}^{(2)}=\begin{cases}
\begin{pmatrix} 1&0\\ R_1e^{2it\theta} &1\end{pmatrix},& z\in\Lambda_1,\\
\begin{pmatrix} 1&-R_3e^{-2it\theta}\\
0&1\end{pmatrix},& z\in\Lambda_3,\\
\begin{pmatrix} 1&0\\ R_4e^{2it\theta}&1\end{pmatrix},& z\in\Lambda_4,\\
\begin{pmatrix} 1&-R_6e^{-2it\theta}\\ 0&1\end{pmatrix},& z\in\Lambda_6,\\
\begin{pmatrix} 1&0\\0&1\end{pmatrix},& z\in\Lambda_2\cup\Lambda_5,
\end{cases}
\end{equation}
which is shown in  Figure \ref{v-ninfty6}.
\begin{figure}[H]
\begin{center}
\begin{tikzpicture}
\draw [thick ](-2.3,0)--(2.4,0);
\draw [thick, -> ] (-2,0)--(-1,0);
\draw[thick,  -> ] (0,0)--(1,0);
\draw[thick ] (-2,-2)--(2,2);
\draw[thick,-> ](-1.5,-1.5)--(-0.8,-0.8);
\draw[thick,-> ](0,0)--(0.8,0.8);
\draw[thick ](-2,2)--(2,-2);
\draw[thick,-> ](-1.5,1.5)--(-0.8,0.8);
\draw[thick,-> ](0,0)--(0.8,-0.8);
\draw[thick ](-1.5,-1.5)--(1.5,1.5);
\draw[thick ](-1.5,1.5)--(1.5,-1.5);
\coordinate (C) at (-0.6,0);
\fill (C) circle (0pt) node[above] {\footnotesize$\Lambda_3$};
\coordinate (E) at (0,0.4);
\fill (E) circle (0pt) node[above] {\footnotesize$\Lambda_2$};
\coordinate (D) at (0.6,0);
\fill (D) circle (0pt) node[above] {\footnotesize$\Lambda_1$};
\coordinate (F) at (0.6,0);
\fill (F) circle (0pt) node[below] {\footnotesize$\Lambda_6$};
\coordinate (J) at (0,-0.4);
\fill (J) circle (0pt) node[below] {\footnotesize$\Lambda_5$};
\coordinate (k) at (-0.6,0.1);
\fill (k) circle (0pt) node[below] {\footnotesize$\Lambda_4$};
\node  [below]  at (0,-0.1) {$z_0$};
\node [thick ] [below]  at (3, 1) {\footnotesize $  \mathcal{R}^{(2)}=\left(\begin{array}{cc} 1&0\\ R_1e^{2it\theta} &1\end{array}  \right) $};
\node [thick ] [below]  at (3.3,-0.4) {\footnotesize $ \mathcal{R}^{(2)}=\left(\begin{array}{cc} 1&-R_6e^{-2it\theta}\\ 0&1\end{array}  \right) $};
\node [thick ] [below]  at (-3.1, 1.1) {\footnotesize $  \mathcal{R}^{(2)}=\left(\begin{array}{cc} 1&-R_3e^{-2it\theta}\\
0&1\end{array}  \right) $};
\node [thick ] [below]  at (-3.2,-0.5) {\footnotesize $ \mathcal{R}^{(2)}=\left(\begin{array}{cc} 1&0\\ R_4e^{2it\theta}&1\end{array}  \right) $};
\node [thick ] [below]  at (0,2) {\footnotesize $  \mathcal{R}^{(2)}=I $};
\node [thick ] [below]  at (0,-1) {\footnotesize $ \mathcal{R}^{(5)}=I $};
\end{tikzpicture}
\end{center}
\caption{ $\mathcal{R}^{(2)}$ in each $\Lambda_j$.}
\label{v-ninfty6}
\end{figure}
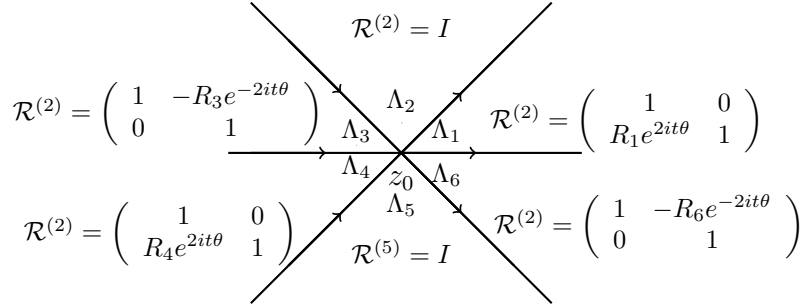

Making $\bar\partial$-differential on the equation (\ref{RT})  and noting that $N^{(2)}$   is analytic in $\Lambda_j, \ j=1, 3, 4, 6$,  we have
\begin{equation}
\overline{\partial} N^{(2)} = N^{(2)}(R^{(2)})^{-1}\overline{\partial} \mathcal{R}^{(2)}= N^{(2)}\overline{\partial} \mathcal{R}^{(2)}.
\end{equation}
 Let $\Sigma^{(2)}=\cup_{j=1}^4 \Sigma_j^{(2)}$.   It  can be shown that $N^{(2)}$ satisfies the following $\bar{\partial}$-RH problem.\\
\textbf{ RH problem 4.} Find a function $N^{(2)}$  with the following properties.

(i)\quad $ N^{(2)}(x,t,z)$ is continuous  in $ \mathbb{C}\setminus \Sigma^{(2)} $;

(ii)\quad The boundary value $ N^{(2)}(x,t,z)$ at $ \Sigma^{(2)}$ satisfies the jump condition
\begin{equation}
N^{(2)}_+ (x,t,z) =N^{(2)}_- (x,t,z)
V_{N}^{(2)}(x,t,z),\quad z\in \Sigma^{(2)}, \label{mup}
\end{equation}
where the jump matrix $ V_{N}^{(2)}(z) $ is defined  by (see Figure \ref{v-ninfty2})
\begin{equation} \label{vnn}
V_{N}^{(2)}=\begin{cases}
e^{-it\theta\widehat{\sigma}_{3}}\begin{pmatrix}
1&0\\
-z_0\rho_2(z_0)\delta_0^{-2}(z-z_0)^{-2i\nu}&1
\end{pmatrix},& z\in\Sigma^{(2)}_1,\\
e^{-it\theta\widehat{\sigma}_{3}}\begin{pmatrix}
1&\frac{\rho_1(z_0)}{1-z_0\rho_1(z_0)\rho_2(z_0)}\delta_0^2(z-z_0)^{2i\nu}\\
0&1
\end{pmatrix},& z\in\Sigma^{(2)}_2,\\
e^{-it\theta\widehat{\sigma}_{3}}\begin{pmatrix}
1&0\\
\frac{z_0\rho_2(z_0)}{1-z_0\rho_1(z_0)\rho_2(z_0)}\delta_0^{-2}(z-z_0)^{-2i\nu}&1
\end{pmatrix},& z\in\Sigma^{(2)}_3,\\
e^{-it\theta\widehat{\sigma}_{3}}\begin{pmatrix}
1&-\rho_1(z_0)\delta_0^2(z-z_0)^{2i\nu}\\
0&1
\end{pmatrix},& z\in\Sigma^{(2)}_4,
\end{cases}
\end{equation}

(iii)  Asymptotic condition
\begin{equation}
N^{(2)}(x,t,z)=I+O(z^{-1}),\qquad as\quad  z\rightarrow\infty.
\end{equation}

 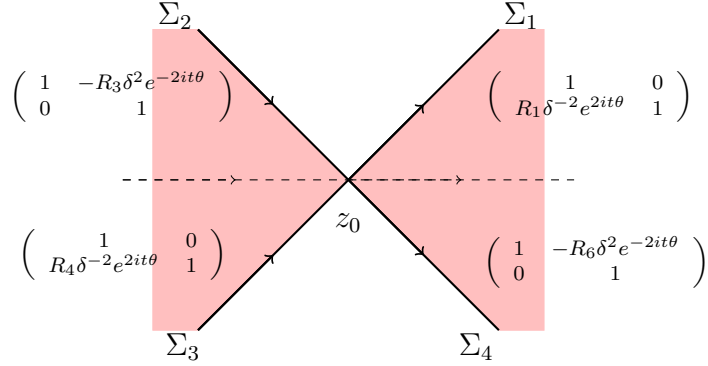
\begin{figure}
\begin{center}
\begin{tikzpicture}
\draw[pink, fill=pink] (0,0)--(-2,-2)--(-2.6,-2)--(-2.6, 2)--(-2, 2 )--(0,0);
\draw[pink, fill=pink] (0,0)--( 2,-2)--( 2.6,-2)--( 2.6, 2)--( 2, 2 )--(0,0);
\draw[dashed] (-3,0)--(3,0);
\draw[dashed]  [ -> ](-3,0)--(-1.5,0);
\draw[dashed]  [   -> ](0,0)--(1.5,0);
\draw[thick ](-2,-2)--(2,2);
\draw[thick,-> ](-2,-2)--(-1,-1);
\draw[thick, -> ](0,0)--(1,1);
\draw[thick ](-2,2)--(2,-2);
\draw[thick,-> ](-2,2)--(-1,1);
\draw[thick, -> ](0,0)--(1,-1);
\node  [below]  at (2.3,2.5 ) {$\Sigma_1$};
\node  [below]  at (-2.3,2.5 ) {$\Sigma_2$};
\node  [below]  at (-2.2 ,-1.9) {$\Sigma_3$};
\node  [below]  at (1.7,-1.9) {$\Sigma_4$};
\node  [below]  at (0,-0.3) {$z_0$};

\node [thick] [below]  at (3.2, 1.6) {\scriptsize $ \left(\begin{array}{cc} 1&0\\ R_1 \delta^{-2} e^{2it\theta} &1\end{array}  \right)  $};
\node [thick] [below]  at (-3,-0.5) {\scriptsize $ \left(\begin{array}{cc} 1&0\\ R_4\delta^{-2}e^{2it\theta} &1\end{array}  \right) $};
\node [thick] [below]  at (-3,1.6) {\scriptsize $\left(\begin{array}{cc} 1& -R_3\delta^{2}e^{-2it\theta} \\ 0&1\end{array}  \right)$};
\node [thick] [below]  at (3.3,-0.6) {\scriptsize $\left(\begin{array}{cc} 1& -R_6\delta^{2}e^{-2it\theta}  \\ 0&1\end{array}  \right)$};
\end{tikzpicture}
\end{center}
\caption{ The jump matrices $V_{N}^{(2)}$ for $N^{(2)}$.  $\bar\partial R^{(2)}\not=0$ in pink domian; and $\bar\partial R^{(2)} =0$ in white domian  }
\label{v-ninfty2}
\end{figure}

(iv) Away from $\Sigma^{(2)}$,  we have
\begin{equation}
\overline{\partial} N^{(2)}= N^{(2)} \overline{\partial} \mathcal{R}^{(2)}, \ \ z\in \mathbb{C}\backslash\Sigma^{(2)},\label{6.22}
\end{equation}
 where
\begin{equation}
\overline{\partial}\mathcal{R}^{(2)}=\begin{cases}
\begin{pmatrix}
0&0\\
\overline{\partial}R_1e^{2it\theta}&1
\end{pmatrix},& z\in\Lambda_1,\\
\begin{pmatrix}
0&-\overline{\partial}R_3e^{-2it\theta}\\
0&0
\end{pmatrix},& z\in\Lambda_3,\\
\begin{pmatrix}
0&0\\
\overline{\partial}R_4e^{2it\theta}&0
\end{pmatrix},& z\in\Lambda_4,\\
\begin{pmatrix}
0&-\overline{\partial}R_6e^{-2it\theta}\\
0&0
\end{pmatrix},& z\in\Lambda_6,\\
\begin{pmatrix}
0&0\\
0&0
\end{pmatrix},& \mathrm{otherwise}.
\end{cases}
\end{equation}
Diagrammatically, the jump matrices are as in Figure \ref{v-ninfty2}.

In order to solve the RH problem 4,  we decompose it into a solvable  RH Problem for $N^{rhp} (x,t,z)$
with $\overline{\partial}R^{(2)}=0$ and a pure $\overline{\partial}$-Problem  $E(x,t,z)$  with $\overline{\partial}R^{(2)} \neq 0$.
The pure RH problem for  $N^{rhp}(z)$  is a solvable model
 associated with   a  Weber equation, which will be analyzed in next section 6;
      The error estimates on the  pure $\overline{\partial}$-problem  for   $E(x,t,z)$ will   given  in  In section 7.

\section{Analysis on a  solvable model}

The  hybrid RH problem  $N^{(2)}(x,t,z)$ with  $\overline{\partial}R^{(2)}=0$ leads to  the following pure  RH problem for the $M^{rhp} (x,t,z)$.\\

\noindent\textbf{ RH problem 5.}  Find a matrix-valued function $N^{rhp} (x,t,z)$ with following properties:

(i) Analyticity: $N^{rhp} (z)$ is analytical in $\mathbb{C} \backslash  \Sigma^{(2)}  $,

(ii) Jump condition:
\begin{equation}
N_{+}^{rhp}( z)=N_{-}^{rhp}( z) V_{N}^{(2)}( z), \quad z \in  \Sigma^{(2)},
\end{equation}
where $ V_{N}^{(2)}( z)$   is given by  (\ref{vnn}).

(iii)   Asymptotic condition
\begin{equation}
N^{rhp}(z)=I+O(z^{-1}),\qquad as\quad  z\rightarrow\infty.
\end{equation}

 Making  a  scaling   transformation 
\begin{align}
 & N^{sol}(\zeta)= N^{rhp}(z)\big|_{ z=\zeta/\sqrt{8t}+z_0},\label{t1}\\
&\rho_2(z_0)=\rho_{20}\delta_0^2e^{-2i\nu(z_0)\ln\sqrt{8t}}e^{4itz_0^2},\label{t2}\\
&\rho_1(z_0)=\rho_{10}\delta_0^{-2}e^{2i\nu(z_0)\ln\sqrt{8t}}e^{-4itz_0^2},\label{t3}
\end{align}
then the RH problem 5 is changed into the  following  RH problem. \\

\noindent\textbf{ RH problem 6.} Find a   matrix-valued function $N^{sol}(\zeta)$  with the following properties:

(i)\quad $N^{sol}(\zeta)$ is analytic on $\zeta\in\mathbb{C}\backslash \Sigma^{(2)}$,

(ii)\quad The boundary value $N^{sol}(\zeta)$  satisfies the jump condition
\begin{equation}
N_+^{sol}(\zeta)=N_-^{sol}(\zeta) V_{N}^{(2)}(\zeta),\quad \zeta\in \Sigma^{(2)},
\end{equation}
where
\begin{equation}
 V_{N}^{(2)}(\zeta)=\begin{cases}
\begin{pmatrix}
1&0\\
z_0\rho_{20}\zeta^{-2i\nu}e^{i\frac{\zeta^2}{2}}&1
\end{pmatrix},\ \qquad \zeta\in \Sigma_1,\\
\begin{pmatrix}
1&-\frac{\rho_{10}}{1-z_0\rho_{10}\rho_{20}}\zeta^{2i\nu}e^{-i\frac{\zeta^2}{2}}\\
0&1
\end{pmatrix},\quad \zeta\in \Sigma_2,\\
\begin{pmatrix}
1&0\\
\frac{z_0\rho_{20}}{1-z_0\rho_{10}\rho_{20}}\zeta^{-2i\nu}e^{i\frac{\zeta^2}{2}}&1
\end{pmatrix},\quad \zeta\in \Sigma_3,\\
\begin{pmatrix}
1&-\rho_{10}\zeta^{2i\nu}e^{-i\frac{\zeta^2}{2}}\\
0&1
\end{pmatrix},\ \qquad \zeta\in \Sigma_4.
\end{cases}\label{sol}
\end{equation}

(iii) Asymptotic condition
\begin{equation}
N^{sol}(\zeta)=I+\frac{N_1^{sol}(\zeta)}{\zeta}+\mathcal{O}( \zeta^{-1}) \quad as\  \zeta\rightarrow\infty ,
\end{equation}

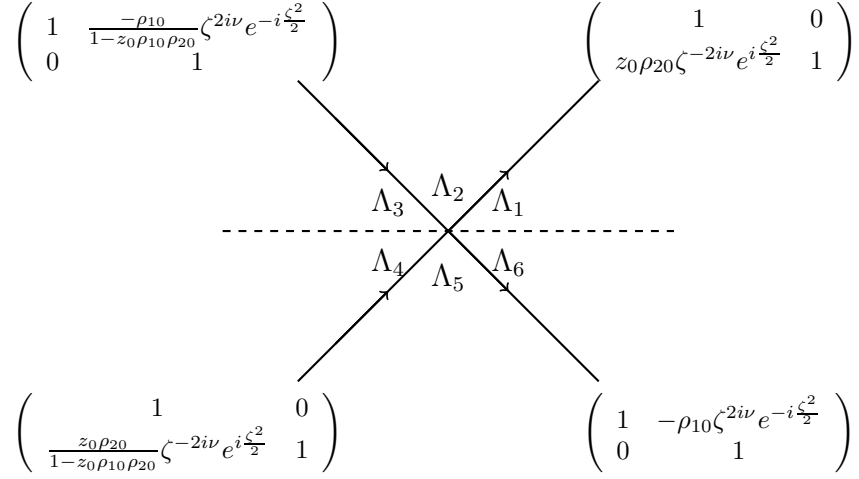
\begin{figure}[H]
\begin{center}
\begin{tikzpicture}
\draw [dashed][thick ](-3,0)--(3,0);
\draw[thick ] (-2,-2)--(2,2);
\draw[thick,-> ](-1.5,-1.5)--(-0.8,-0.8);
\draw [thick,-> ](0,0)--(0.8,0.8);
\draw[thick ](-2,2)--(2,-2);
\draw[thick,-> ](-1.5,1.5)--(-0.8,0.8);
\draw[thick,-> ](0,0)--(0.8,-0.8);
\node  [below]  at (0.8,0.7) {$ \Lambda_1$};
\node  [below]  at (0, 0.9) {$ \Lambda_2$};
\node  [below]  at (-0.8, 0.7) {$ \Lambda_3$};
\node  [below]  at (-0.8,-0.1) {$ \Lambda_4$};
\node  [below]  at (0, -0.3) {$ \Lambda_5$};
\node  [below]  at (0.8,-0.1) {$ \Lambda_6$};
\node [thick ] [below]  at (3.6, 3.2) {\footnotesize $   \left(\begin{array}{cc} 1&0\\
z_0\rho_{20}\zeta^{-2i\nu}e^{i\frac{\zeta^2}{2}}&1\end{array}  \right) $};
\node [thick ] [below]  at (-3.6,3.2) {\footnotesize $  \left(\begin{array}{cc} 1&\frac{-\rho_{10}}{1-z_0\rho_{10}\rho_{20}}\zeta^{2i\nu}e^{-i\frac{\zeta^2}{2}}\\
0&1\end{array}  \right) $};
\node [thick ] [below]  at (-3.6,-2) {\footnotesize $  \left(\begin{array}{cc} 1&0\\
\frac{z_0\rho_{20}}{1-z_0\rho_{10}\rho_{20}}\zeta^{-2i\nu}e^{i\frac{\zeta^2}{2}}&1\end{array}  \right)$};
\node [thick ] [below]  at (3.6,-2) {\footnotesize $  \left(\begin{array}{cc} 1&-\rho_{10}\zeta^{2i\nu}e^{-i\frac{\zeta^2}{2}}\\
0&1\end{array}  \right)$};
\end{tikzpicture}
\end{center}
\caption{ $ \Sigma^{sol}$ and domains $\Lambda_j$, $j=1,\ldots6$.}
\label{vn5}
\end{figure}

The contour  $\Sigma^{(2)}$ and real axis  $\mathbb{R}$  divide complex plane $\mathbb{C}$ into six  different domains $\Lambda_j$, $j=1, \ldots,6$ which are   shown in Figure \ref{vn5}.
 Let
\begin{equation}
N^{sol}(\zeta)=\vartheta(\zeta)P_0e^{\frac{i}{4}\zeta^2\sigma_3}\zeta^{-i\nu(z_0)\sigma_3},\label{pcd}
\end{equation}
 where
\begin{equation}
P_0=\begin{cases}
\begin{pmatrix}
1&0\\
z_0\rho_{20}&1
\end{pmatrix},& \zeta\in \Lambda_1\\
\begin{pmatrix}
1&-\frac{\rho_{10}}{1-z_0\rho_{10}\rho_{20}}\\
0&1
\end{pmatrix},& \zeta\in \Lambda_3\\
\begin{pmatrix}
1&0\\
\frac{z_0\rho_{20}}{1-z_0\rho_{10}\rho_{20}}&1
\end{pmatrix},& \zeta\in \Lambda_4\\
\begin{pmatrix}
1&-\rho_{10}\\
0&1
\end{pmatrix},& \zeta\in \Lambda_6\\
\begin{pmatrix}
1&0\\
0&1
\end{pmatrix},& \zeta\in \Lambda_2\cup\Lambda_5.
\end{cases}
\end{equation}
It is easy to  check that  $\vartheta$  satisfies the following RH problem, see  \cite{CF}.\\
\textbf{ RH problem 7.}  Find a   matrix-valued function $\vartheta(\zeta)$  with the following properties

(i)\quad $ \vartheta $ is analytic in $ \mathbb{C}\backslash \mathbb{R} $,

(ii)\quad The boundary value $\vartheta(\zeta)$ at $\mathbb{R}$ satisfies the jump condition
\begin{equation}
\vartheta_+(\zeta)=\vartheta_-(\zeta)
V(0),\quad \zeta\in \mathbb{R} , \label{beiyong}
\end{equation}
where
$$
V(0)= \begin{pmatrix}
1-z_0\rho_{1}(z_0)\rho_{2}(z_0)  &-\rho_{1}(z_0)\\
z_0\rho_{2}(z_0)&1
\end{pmatrix}.
$$

(iii) Asymptotic condition
\begin{equation}
\vartheta e^{\frac{i\zeta^2}{4}\sigma_{3}}\zeta^{-i\nu(z_0)\sigma_3}\rightarrow I ,\quad as \ \zeta\rightarrow\infty .
\end{equation}
This kind of RH problem can be changed into a Werber equation to get the solution in terms of parabolic cylinder functions.

\begin{proposition}    The   solution to the RH problem 6 is given by $\vartheta_+(\zeta)$ and $\vartheta_-(\zeta)$, defined in $\mathbb{C}_+$ and $\mathbb{C}_-$ respectively. For $\zeta\in \mathbb{C}_+$,
\begin{subequations}
\begin{align}
&(\vartheta_+)_{11}(\zeta)=e^{-\frac{3\pi\nu}{4}}D_{i\nu}(e^{-\frac{3i\pi}{4}}\zeta),\nonumber\\
&(\vartheta_+)_{12}(\zeta)=e^{\frac{\pi\nu}{4}}(\beta_{21})^{-1}[\partial_\zeta D_{-i\nu}(e^{-\frac{i\pi}{4}}\zeta)-\frac{i\zeta}{2}D_{-i\nu}(e^{-\frac{i\pi}{4}}\zeta)],\nonumber\\
&(\vartheta_+)_{21}(\zeta)=e^{-\frac{3\pi\nu}{4}}(\beta_{12})^{-1}[\partial_\zeta D_{i\nu}(e^{-\frac{3i\pi}{4}}\zeta)+\frac{i\zeta}{2}D_{i\nu}(e^{-\frac{3i\pi}{4}}\zeta)],\nonumber\\
&(\vartheta_+)_{22}(\zeta)=e^{\frac{\pi\nu}{4}}D_{-i\nu}(e^{-\frac{i\pi}{4}}\zeta).\nonumber
\end{align}
\end{subequations}
For $\zeta\in \mathbb{C}_-$,
\begin{subequations}
\begin{align}
&(\vartheta_-)_{11}(\zeta)=e^{\frac{\pi\nu}{4}}D_{i\nu}(e^{\frac{i\pi}{4}}\zeta),\nonumber\\
&(\vartheta_-)_{12}(\zeta)=e^{-\frac{3\pi\nu}{4}}(\beta_{21})^{-1}[\partial_\zeta D_{-i\nu}(e^{-\frac{3i\pi}{4}}\zeta)-\frac{i\zeta}{2}D_{-i\nu}(e^{-\frac{3i\pi}{4}}\zeta)],\nonumber\\
&(\vartheta_-)_{21}(\zeta)=e^{\frac{\pi\nu}{4}}(\beta_{12})^{-1}[\partial_\zeta D_{i\nu}(e^{\frac{i\pi}{4}}\zeta)+\frac{i\zeta}{2}D_{i\nu}(e^{\frac{i\pi}{4}}\zeta)],\nonumber\\
&(\vartheta_-)_{22}(\zeta)=e^{-\frac{3\pi\nu}{4}}D_{-i\nu}(e^{\frac{3i\pi}{4}}\zeta).\nonumber
\end{align}
\end{subequations}
where
\begin{equation}
\beta_{12}=\frac{(2\pi)^{\frac{1}{2}}e^{\frac{i\pi}{4}}
e^{-\frac{\pi\nu}{2}}}{z_0\rho_{20}\Gamma(-i\nu)}, \ \ \  {\beta}_{21}  =  \frac{\nu}{\beta_{12}},  \label{beta}
\end{equation}
and $D_{a}(\xi)=D_a(e^{-\frac{3i\pi}{4}}\zeta)$ is a solution of the Weber equation
\begin{align}
\partial_\xi^2D_{a}(\xi)+ \left[\frac{1}{2}-\frac{\xi^2}{4}+a\right]D_{a}(\xi)=0.  \nonumber
\end{align}
\end{proposition}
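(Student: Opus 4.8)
The plan is to convert the model problem (RH problem 7) for $\vartheta(\zeta)$ into a linear ODE with polynomial coefficients and then recognize its fundamental solutions as parabolic cylinder functions; the sought formulas for $N^{sol}$ then follow from the relation (\ref{pcd}).

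The decisive observation is that the jump matrix $V(0)$ in (\ref{beiyong}) is independent of $\zeta$. Hence, differentiating $\vartheta_+=\vartheta_- V(0)$ gives $(\partial_\zeta\vartheta)_+=(\partial_\zeta\vartheta)_- V(0)$, so the logarithmic derivative $H(\zeta):=(\partial_\zeta\vartheta)\,\vartheta^{-1}$ carries no jump across $\mathbb{R}$ and extends to an entire function (note $\det V(0)=1$ guarantees $\vartheta^{-1}$ exists and $\det\vartheta\equiv1$). To pin down $H$, I would invert the normalization (iii) and write $\vartheta=\big(I+\vartheta_1\zeta^{-1}+O(\zeta^{-2})\big)\zeta^{i\nu(z_0)\sigma_3}e^{-\frac{i}{4}\zeta^2\sigma_3}$; inserting this into $H$ and collecting powers of $\zeta$ yields $H(\zeta)=-\frac{i}{2}\zeta\sigma_3+\frac{i}{2}[\sigma_3,\vartheta_1]+O(\zeta^{-1})$. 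Because $H$ is entire and grows at most linearly in every direction, Liouville's theorem forces the remainder to vanish, so $H(\zeta)=-\frac{i}{2}\zeta\sigma_3+B$ with $B=\frac{i}{2}[\sigma_3,\vartheta_1]$ off-diagonal. Thus $\vartheta$ solves the first-order system $\partial_\zeta\vartheta=\big(-\frac{i}{2}\zeta\sigma_3+B\big)\vartheta$, where I write $B_{12}=\beta_{12}$ and $B_{21}=\beta_{21}$.

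Next I would decouple the system. Writing out the two columns and eliminating one entry, each component of $\vartheta$ satisfies a scalar second-order equation; after the rotation $\xi=e^{-3i\pi/4}\zeta$ (and its sector-dependent analogues) this is exactly the Weber equation $\partial_\xi^2 D_a+\big(\tfrac12-\tfrac{\xi^2}{4}+a\big)D_a=0$ with parameter $a=\pm i\nu$, the sign being fixed by which diagonal entry of $\sigma_3$ governs the relevant component, and the value $\beta_{12}\beta_{21}=\nu$ being forced by matching the constant term of the reduced equation to the Weber parameter. Selecting in each half-plane the parabolic cylinder solution that decays like $D_a(\xi)\sim\xi^a e^{-\xi^2/4}$ along the appropriate ray reproduces the prescribed oscillatory-algebraic normalization $\vartheta e^{\frac{i}{4}\zeta^2\sigma_3}\zeta^{-i\nu\sigma_3}\to I$, and assembling the four entries gives the stated expressions for $\vartheta_+$ on $\mathbb{C}_+$ and $\vartheta_-$ on $\mathbb{C}_-$, the latter obtained from the former by the reflection symmetry of the equation.

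The main obstacle is the evaluation of the constants $\beta_{12},\beta_{21}$, which requires enforcing the jump $\vartheta_+=\vartheta_- V(0)$ across $\mathbb{R}$. Here the two sides involve $D_{i\nu}$ and $D_{-i\nu}$ on rays differing by multiples of $e^{i\pi/2}$, so one must invoke the connection (Stokes) formulas for parabolic cylinder functions together with the chosen branch $-\pi<\arg(\zeta-z_0)<\pi$ of $\zeta^{\pm i\nu}$. Matching the off-diagonal $(2,1)$ entry of $V(0)$, namely $z_0\rho_2(z_0)=z_0\rho_{20}$, against the connection coefficient — which introduces $\Gamma(-i\nu)$ and the Gaussian factor $e^{-\pi\nu/2}$ — produces $\beta_{12}=\dfrac{(2\pi)^{1/2}e^{i\pi/4}e^{-\pi\nu/2}}{z_0\rho_{20}\,\Gamma(-i\nu)}$, and the identity $\beta_{12}\beta_{21}=\nu$ from the ODE reduction then gives $\beta_{21}=\nu/\beta_{12}$, as in (\ref{beta}). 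Verifying that the same pair $(\beta_{12},\beta_{21})$ is consistent with the jumps on all four rays $\Sigma_1,\dots,\Sigma_4$ is the delicate bookkeeping step, but each required relation is a standard parabolic-cylinder connection identity, carried out as in the model-problem analysis of \cite{CF,lta29}.
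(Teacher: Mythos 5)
Your proposal is correct and is essentially the same argument the paper relies on (the paper itself omits the proof, deferring to \cite{CF} and the remark that the constant-jump problem for $\vartheta$ "can be changed into a Weber equation"): pass to $\vartheta$ via (\ref{pcd}), use the $\zeta$-independence of $V(0)$ and Liouville's theorem to get the first-order system $\partial_\zeta\vartheta=(-\tfrac{i}{2}\zeta\sigma_3+B)\vartheta$, decouple it into the Weber equation, and fix $\beta_{12}$, $\beta_{21}$ from the parabolic-cylinder connection formulas and the normalization. This matches the standard model-problem analysis the paper invokes, so no further comparison is needed.
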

Finally, by using the proposition and (\ref{pcd}),  we then  get
\begin{equation}
N^{sol} =I+ \frac{N^{sol}_1}{\zeta}+\mathcal{O}(\zeta^{-2}),\nonumber
\end{equation}
where
\begin{equation}
N^{sol}_1 = \begin{pmatrix}
0&-i\beta_{12}\\
i\beta_{21}&0
\end{pmatrix} .
\end{equation}

\section{ Analysis on a  pure $\bar{\partial}$ problem }
\hspace*{\parindent}
Suppose that $N^{(2)}$ is a solution of the RH problem 4,  then the ratio
\begin{equation}
E( z)=N^{(2)}( z)N^{rhp}(z)^{-1},\label{ET}
\end{equation}
will have no jumps in the plane, but is a solution of the following problem. \\
 \textbf{RH  problem 8.}  Find a function $E( z)$ with the following properties:

(i) $E( z)$ is continuous in $\mathbb{C}$,

(ii) Asymptotic condition
\begin{equation}
E (z)=I+O(z^{-1}),\qquad as\quad  z\rightarrow\infty.
\end{equation}

(iii)   $E(x,t,z)$ satisfies the $\bar{\partial}$-equation
\begin{equation}
\begin{aligned}
\overline{\partial}E=EW( z), \label{dbar}
\end{aligned}
\end{equation}
where
\begin{equation}\label{8.4}
W(z) = \begin{cases}
N^{rhp}(z)\begin{pmatrix}
0&0\\
\overline{\partial}R_1e^{2it\theta}\delta^{-2}&0
\end{pmatrix}N^{rhp}(z)^{-1},&  z\in \Lambda_1,\\ \\
N^{rhp}(z) \begin{pmatrix}
0&-\overline{\partial}R_3e^{-2it\theta}\delta^2\\
0&0
\end{pmatrix}N^{rhp}(z)^{-1},& z\in\Lambda_3,\\ \\
N^{rhp}(z) \begin{pmatrix}
0&0\\
\overline{\partial}R_4e^{2it\theta}\delta^{-2}&0
\end{pmatrix}N^{rhp}(z)^{-1},& z\in\Lambda_4,\\ \\
N^{rhp}(z) \begin{pmatrix}
0&-\overline{\partial}R_6e^{-2it\theta}\delta^2\\
0&0
\end{pmatrix}N^{rhp}(z)^{-1},& z\in\Lambda_6,\\ \\
\begin{pmatrix}
0&0\\
0&0
\end{pmatrix}, & otherwise.
\end{cases}
\end{equation}

\begin{proof}
Noticing that $N^{rhp}(z) $ is holomorphic in $\mathbb{C}\backslash\Sigma^{(2)}$,  by using (\ref{6.22}),
direct calculation shows that

\begin{equation}\nonumber
\begin{aligned}
\overline{\partial}E&=\overline{\partial}N^{(2)}(N^{PC})^{-1} =N^{(2)}\overline{\partial}\mathcal{R}^{(2)}(N^{PC})^{-1}\\
&=\{ N^{(2)}(N^{PC})^{-1}\} \{    N^{PC} \overline{\partial}\mathcal{R}^{(2)}(N^{PC})^{-1}\}=EW(x,t,z),
\end{aligned}
\end{equation}
where $W(x,t,z)$ is given by (\ref{8.4}).
\end{proof}

The solution of the $\bar\partial$-equation (\ref{dbar}) can be expressed by  the following integral
\begin{equation}
E=I-\frac{1}{\pi}\int\int\frac{EW}{s-z}\mathrm{dA(s)},\label{E}
\end{equation}
 which can be written    in operator equation
\begin{equation}
 (I-J) E =I,\label{JE}
\end{equation}
where
\begin{equation}
J(E)=-\frac{1}{\pi}\int\int\frac{EW}{s-z}\mathrm{dA(s)}.\label{JE}
\end{equation}
In order to show the solvability of (\ref{JE}),  we  prove  that $J$ is small    norm as $t\rightarrow \infty$.
\begin{proposition}   As $t\rightarrow \infty$,  we have the following estimate
\begin{equation}
||J||_{L^\infty\rightarrow L^\infty}\leq ct^{-1/4}.
\end{equation}
\end{proposition}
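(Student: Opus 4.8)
The plan is to reduce the bound on the operator norm to a uniform estimate of a Cauchy-type double integral and then exploit the Gaussian decay coming from the phase. Since $J(E)(z)=-\frac{1}{\pi}\iint \frac{E(s)W(s)}{s-z}\,dA(s)$, taking operator norms gives
\begin{equation}
\|J\|_{L^\infty\to L^\infty}\leq \frac{1}{\pi}\sup_{z\in\mathbb{C}}\iint_{\mathbb{C}}\frac{|W(s)|}{|s-z|}\,dA(s).\nonumber
\end{equation}
In $W$ the factors $N^{rhp}$, $(N^{rhp})^{-1}$ (unimodular, hence both bounded via Proposition on the solvable model) and $\delta^{\pm2}$ (bounded by Theorem part (i)) contribute only a constant, so the weight is controlled by $|\bar\partial R_j|\,|e^{\pm 2it\theta}|$. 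Writing $s-z_0=u+iv$ and using $\theta(z)=\theta(z_0)+2(z-z_0)^2$, one has $|e^{2it\theta}|=e^{-8tuv}$, which decays on the support of $W$ (the sectors $\Lambda_1,\Lambda_3,\Lambda_4,\Lambda_6$). By the symmetry of the four sectors it suffices to estimate the contribution of $\Lambda_1=\{0<v<u\}$; the remaining three are identical up to conjugation and reflection.

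First I would use the Lemma to split $|\bar\partial R_1|\leq c_1|s-z_0|^{-1/2}+c_2|p_1'(\operatorname{Re}s)|$ and treat the two pieces separately. For the first piece put $g(s)=|s-z_0|^{-1/2}e^{-8tuv}\mathbf{1}_{\{0<v<u\}}$ and estimate, by H\"older with $1/p+1/p'=1$ and $2<p<4$,
\begin{equation}
\iint_{\Lambda_1}\frac{g(s)}{|s-z|}\,dA\leq \|g\|_{L^p}\left\|\frac{1}{|\cdot-z|}\right\|_{L^{p'}(\operatorname{supp}g)}.\nonumber
\end{equation}
The rescaling $u=t^{-1/2}U$, $v=t^{-1/2}V$ shows the effective support radius is $\sim t^{-1/2}$ and gives $\|g\|_{L^p}\lesssim t^{1/4-1/p}$, while the local $L^{p'}$-norm of the Cauchy kernel over a disc of radius $t^{-1/2}$ is $\lesssim t^{1/p-1/2}$, uniformly in $z$. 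The two powers of $t$ combine to $t^{-1/4}$, with the $1/p$ cancelling, so this piece is $O(t^{-1/4})$ independently of the choice of $p\in(2,4)$.

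The $L^2$-regularity piece is the delicate one. Here $p_1'=(z\rho_2)'\in L^2(\mathbb{R})$, which follows from $q_0\in H^{2,2}(\mathbb{R})$ yielding reflection coefficients in $H^1$, so that only $L^2$ (not $L^\infty$) control of the non-analytic part is available. I would handle it by applying H\"older in $v$ alone: with $b=\operatorname{Re}z-z_0$ and $a(u)=|u-b|=|\operatorname{Re}(s-z)|$, the kernel satisfies $\|(s-z)^{-1}\|_{L^q(dv)}\lesssim a(u)^{1/q-1}$ for $q<2$, while the Gaussian contributes $(\int_0^\infty e^{-p'\,8tuv}\,dv)^{1/p'}\lesssim (tu)^{-1/p'}$. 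Integrating in $u$ and using Cauchy–Schwarz against $\|p_1'\|_{L^2}$ again produces the factor $t^{-1/4}$, so $\|J\|_{L^\infty\to L^\infty}\lesssim t^{-1/4}$.

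I expect the main obstacle to be precisely the uniformity in $z$ of this last estimate, in particular near $\operatorname{Re}z=z_0$, where the singularity $a(u)^{1/q-1}$ of the kernel and the weight in the $u$-integration tend to collide; the H\"older exponents must be balanced at the endpoint $p\uparrow 4$ to keep both the $u$-integral convergent at infinity and the dependence on $b$ bounded, and one may need to split the $u$-integration according to whether $\operatorname{Im}z$ lies in the strip $(0,u)$. The $|s-z_0|^{-1/2}$ piece, by contrast, is clean once the rescaling is in place. Throughout I would invoke only the already-established boundedness of $N^{rhp}$ and $\delta^{\pm 2}$ and the $L^2$ bounds on $p_j'$, carrying out the computation in full for $\Lambda_1$ and appealing to symmetry for the other sectors, exactly as in the analogous argument of \cite{lta29}.
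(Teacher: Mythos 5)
Your overall framework coincides with the paper's: reduce $\|J\|_{L^\infty\to L^\infty}$ to $\sup_z\iint|W(s)|\,|s-z|^{-1}dA(s)$, absorb $N^{rhp}$, $(N^{rhp})^{-1}$ and $\delta^{\pm2}$ into constants, split $|\bar\partial R_1|\le c_1|s-z_0|^{-1/2}+c_2|p_1'(\mathrm{Re}\,s)|$ via the Lemma, and work only on $\Lambda_1$ by symmetry. But both of your core estimates contain genuine gaps, and in both cases the missing idea is the same one the paper uses: the H\"older/Cauchy--Schwarz pairing must be performed in the horizontal variable $u$ \emph{first}, so that all dependence on $\mathrm{Re}\,z$ is eliminated before the Gaussian decay is spent on the $v$-integral.

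Concretely: (1) For the $|s-z_0|^{-1/2}$ piece, your two-dimensional H\"older step uses $\|\,|s-z|^{-1}\|_{L^{p'}(\mathrm{supp}\,g)}$ with $p'<2$; but $\mathrm{supp}\,g$ is the whole unbounded sector, on which this norm is infinite ($|s-z|^{-p'}$ is not integrable at infinity in two dimensions when $p'<2$). Replacing the support by ``a disc of radius $t^{-1/2}$'' is unjustified: along the real axis $g(u+iv)\approx u^{-1/2}$ whenever $v\lesssim (tu)^{-1}$, so $g$ is of order one at distance $O(1)$ from $z_0$. The tail can indeed be controlled, but only by a separate near/far splitting or by the paper's one-dimensional route, $\||s-z_0|^{-1/2}\|_{L^p(du)}\lesssim v^{1/p-1/2}$ and $\|(s-z)^{-1}\|_{L^q(du)}\lesssim|v-\tau|^{1/q-1}$, followed by the $v$-integration against $e^{-4tv^2}$; your argument omits this entirely. (2) For the $|p_1'|$ piece, your order of operations (H\"older in $v$ pairing kernel against Gaussian, then Cauchy--Schwarz in $u$) fails quantitatively. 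With $1/q+1/p'=1$ it yields $\|p_1'\|_{L^2}\,t^{-1/p'}\bigl(\int_0^\infty|u-b|^{-2/p'}u^{-2/p'}du\bigr)^{1/2}$, and this integral diverges at $u=\infty$ at the endpoint $p'=4$ (the exponent you need for $t^{-1/4}$), while for $2<p'<4$ it scales as $c\,|b|^{1-4/p'}$, which blows up as $b=\mathrm{Re}\,z-z_0\to0$; the resulting bound is uniform only for $|b|\gtrsim t^{-1/2}$. You correctly flag this as ``the main obstacle,'' but the fixes you suggest (endpoint balancing, splitting on $\mathrm{Im}\,z$) do not repair it: the obstruction is structural, not a matter of tuning exponents. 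The paper's proof avoids it by doing Cauchy--Schwarz in $u$ first, $\int_v^\infty|p_1'|\,|s-z|^{-1}du\le\|p_1'\|_{L^2}\|(s-z)^{-1}\|_{L^2(du)}\le\|p_1'\|_{L^2}(\pi/|v-\tau|)^{1/2}$, after which $\mathrm{Re}\,z$ has disappeared and $\int_0^\infty e^{-4tv^2}|v-\tau|^{-1/2}dv\lesssim t^{-1/4}$ uniformly in $\tau=\mathrm{Im}\,z$ (splitting the integral at $\tau$). Reordering your estimates in this way is precisely the step your proposal is missing.
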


\begin{proof} We give the details for sector $\Lambda_1$ only as the corresponding arguments for other
sectors which are identical with appropriate modifications. Let $f\in L^\infty(\Lambda_1)$, then
\begin{equation}
\begin{aligned}
|J(f)| &\leq\int\int_{\Lambda_1}\frac{|f\bar{\partial}R_1\delta^{-2}e^{2it\theta}|}{|s-z|} {dA(s)}\\
&\leq||f||_{L^\infty(\Lambda_1)}||\delta^{-2}||_{L^\infty(\Lambda_1)}\int\int_{\Lambda_1}\frac{|\bar{\partial}R_1e^{2it\theta}|}{|s-z|} {dA(s)}\\
&\leq  I_1+I_2,
\end{aligned}
\end{equation}
where
\begin{subequations}
\begin{align}
&I_1=\int\int_{\Lambda_1}\frac{|p'_1|e^{-4tv(u-z_0)}}{|s-z|}{dA(s)},\\
&I_2=\int\int_{\Lambda_1}\frac{|s-z_0|^{-1/2}e^{-4tv(u-z_0)}}{|s-z|}{dA(s)}.
\end{align}
\end{subequations}
Since $\rho_1(z),\rho_2(z)\in H^{2,2}(\mathbb{R})$ and we set $s=u+iv$, $z=\alpha+i\tau$,  then
\begin{equation}
\begin{aligned}
I_1&\leq\int_0^\infty\int_v^\infty\frac{|p'_1|e^{-4tv(u-z_0)}}{|s-z|}{dudv}\\
&\leq\int_0^\infty e^{-4tv^2}\int_v^\infty\frac{|p'_1|}{|s-z|}{dudv}\\
&\leq \|p_1^{\prime}(u)\|_{L^{2}(\mathbb{R})}\int_0^\infty e^{-4tv^2}||\frac{1}{s-z}||_{L^2((v,\infty))}{dv}.
\end{aligned}
\end{equation}
Moreover,
\begin{equation}
\begin{aligned}
||\frac{1}{s-z}||_{L^2((v,\infty))}&\leq \left(\int_\mathbb{R}\frac{1}{|s-z|^2}{du}\right)^{1/2} \leq \left(\frac{\pi}{|v-\tau|}\right)^{1/2},
\end{aligned}
\end{equation}
Thus we have
\begin{equation}
|I_1|\leq C_1\int_0^\infty\frac{e^{-4tv^2}}{\sqrt{\tau-v}} {dv}= C_1\left[\int_0^\tau\frac{e^{-4tv^2}}{\sqrt{\tau-v}} {dv}+\int_\tau^\infty\frac{e^{-4tv^2}}{\sqrt{v-\tau}} {dv}\right].
\end{equation}
Using the fact $\sqrt{\tau}e^{-4t\tau^2p^2}\leq ct^{-1/4}p^{-1/2}$, we obtain
\begin{equation}
\begin{aligned}
\int_0^\tau\frac{e^{-4tv^2}}{\sqrt{\tau-v}} {dv}&\leq\int_0^1\sqrt{\tau}\frac{e^{-4t\tau^2p^2}}{\sqrt{1-p}} {dp}
&\leq ct^{-1/4}\int_0^1\frac{1}{\sqrt{p(1-p)}} {dp}\leq c_1t^{-1/4},
\end{aligned}
\end{equation}
whereas using the variable substitution $w=v-\tau$,
\begin{equation}
\int_\tau^\infty\frac{e^{-4tv^2}}{\sqrt{v-\tau}}{dv}\leq\int_0^\infty\frac{e^{-4tw^2}}{\sqrt{w}} {dw}.
\end{equation}
According to $e^{-tw^2}w^{5/2}\leq ct^{-1/4}$, it becomes
\begin{equation}
\int_\tau^\infty\frac{e^{-4tv^2}}{\sqrt{v-\tau}}{dv}\leq c_2t^{-1/4}.
\end{equation}
Hence the final estimation is
\begin{equation}
|I_1|\leq C_2t^{-1/4}.\label{I1}
\end{equation}
To arrive at a similar estimate for $I_2$ , we start with the following $L^p$-estimate for $p>2$.
\begin{equation}
\begin{aligned}
|||s-z_0|^{-1/2}||_{L^p(du)}&=(\int_{z_0+v}^\infty\frac{1}{|u+iv-z_0|^{p/2} }{du})^{1/p}\\
&=(\int_{v}^\infty\frac{1}{|u+iv|^{p/2}} {du})^{1/p}\\
&=(\int_{v}^\infty\frac{1}{(u^2+v^2)^{p/4}} {du})^{1/p}\\
&=v^{(1/p-1/2)}\left(\int_{\pi/4}^{\pi/2}(\cos x)^{p/2-2} {dx}\right)^{1/p}\\
&\leq cv^{(1/p-1/2)}
\end{aligned}
\end{equation}
Similarly to the $L^2$-estimate above, we obtain for $L^q$ with  $1/p+1/q=1$.
\begin{equation}
||\frac{1}{s-z}||_{L^p(v,\infty)}\leq c|v-\tau|^{1/q-1}.
\end{equation}
It follows that
\begin{equation}
|I_2|\leq c[\int_0^\tau e^{-4tv^2}v^{(1/p-1/2)}|v-\tau|^{1/q-1}{dv}+\int_\tau^\infty e^{-tv^2}v^{(1/p-1/2)}|v-\tau|^{1/q-1}{dv}].
\end{equation}
By  using the fact $\sqrt{\tau}e^{-4t\tau^2w^2}\leq ct^{-1/4}w^{-1/2}$,  the first integral yields
\begin{equation}
\int_0^\tau e^{-4tv^2}v^{(1/p-1/2)}|v-\tau|^{1/q-1}{dv}\leq ct^{-1/4}.\label{I1}
\end{equation}
Let $v=\tau+w$,  the estimate for the second integral$I_2$  leads to 
\begin{equation}
\int_0^\infty e^{-4t(\tau+w)^2}(\tau+w)^{(1/p-1/2)}w^{1/q-1}{dw}\leq \int_0^\infty e^{-tw^2}w^{-1/2}{dw}.
\end{equation}
Then making use of the variable substitution $y=\sqrt{t}w$ yields
\begin{equation}
\int_\tau^\infty e^{-tv^2}v^{(1/p-1/2)}|v-\tau|^{1/q-1}{dv}\leq ct^{-1/4}.
\end{equation}
Combining the previous estimates of $I_1$ in (\ref{I1}), the final result is described below
\begin{equation}
|I_2|\leq ct^{-1/4}.\label{I2}
\end{equation}
Finally,  combing (\ref{I1}) and (\ref{I2}) gives 
\begin{equation}
|J |\leq ct^{-1/4}.
\end{equation}
\end{proof}

\begin{proposition}
For sufficiently large $t$,  the integral equation (\ref{JE}) may be inverted by Neumann series. Its asymptotic expression is
\begin{equation}
E(z)=I+O(t^{-1/4}).\label{E}
\end{equation}
\end{proposition}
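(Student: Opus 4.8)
The plan is to read (\ref{JE}) as a resolvent equation $(I-J)E=I$ in $L^\infty$ and to invert $I-J$ by a Neumann series, the only substantive input being the small-norm estimate already proved. Indeed, the preceding proposition gives $\|J\|_{L^\infty\to L^\infty}\leq ct^{-1/4}$; everything that follows is soft functional analysis.

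First I would fix $t$ large enough that $ct^{-1/4}<1$. Then $J$ is a bounded linear operator on $L^\infty$ of operator norm strictly below one, so $I-J$ is invertible with inverse given by the norm-convergent Neumann series $(I-J)^{-1}=\sum_{n=0}^{\infty}J^n$ and $\|(I-J)^{-1}\|_{L^\infty\to L^\infty}\leq(1-ct^{-1/4})^{-1}$. Applying this to the (bounded, hence $L^\infty$) constant matrix $I$ produces the unique solution $E=(I-J)^{-1}I$ of (\ref{JE}); this simultaneously settles solvability and uniqueness and yields the uniform bound $\|E\|_{L^\infty}\leq(1-ct^{-1/4})^{-1}$, which stays controlled as $t\to\infty$.

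Next I would read off the decay rate. Subtracting $I$ from the fixed-point identity $E=I+J(E)$ gives $E-I=J(E)$, whence
\[
\|E-I\|_{L^\infty}\leq\|J\|_{L^\infty\to L^\infty}\,\|E\|_{L^\infty}\leq\frac{ct^{-1/4}}{1-ct^{-1/4}}=O(t^{-1/4}),\qquad t\to\infty.
\]
This is exactly the claimed $E(z)=I+O(t^{-1/4})$, uniformly in $z\in\mathbb{C}$.

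The genuine difficulty has already been spent in the previous proposition: the splitting $|J(f)|\leq I_1+I_2$, the $L^2$ and $L^p$ bounds on $\|(s-z)^{-1}\|$ and $\||s-z_0|^{-1/2}\|$, and the Gaussian estimates that extract the $t^{-1/4}$ rate are precisely what make $\|J\|$ small. Granting that, the only points needing care here are bookkeeping: that $J$ really maps $L^\infty$ into itself on the support of $\bar\partial\mathcal{R}^{(2)}$ (the sectors $\Lambda_1,\Lambda_3,\Lambda_4,\Lambda_6$), that the constant $c$ is uniform in $z_0\in\mathbb{R}$ so that smallness survives the self-similar scaling, and that $W(z)$ in (\ref{8.4}) is controlled through the uniform boundedness of $N^{rhp}$ and $(N^{rhp})^{-1}$ from the solvable-model section. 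For the later reconstruction of $q(x,t)$ one will additionally want the $1/z$ coefficient of $E$; this can be obtained by expanding the same Neumann series in powers of $1/z$, but it is not needed for the $L^\infty$ statement proved here.
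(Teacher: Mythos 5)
Your proposal is correct and follows exactly the route the paper intends: the paper states this proposition without any written proof, treating it as an immediate consequence of the preceding estimate $\|J\|_{L^\infty\to L^\infty}\leq ct^{-1/4}$, and your Neumann-series inversion together with the identity $E-I=J(E)$ giving $\|E-I\|_{L^\infty}\leq \|J\|\,\|E\|_{L^\infty}=O(t^{-1/4})$ is precisely the standard small-norm argument being invoked. Your additional remarks on uniformity in $z_0$ and on the boundedness of $N^{rhp}$, $(N^{rhp})^{-1}$ are sensible bookkeeping that the paper also leaves implicit.
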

After proving the existence and analyzing the asymptotic result of $E( z)$, our goal is to establish the relation between $E( z)$ and $N( z)$, and to obtain the asymptotic behavior of $N( z)$ by using the estimation (\ref{E}).

\section{ Long time asymptotics of the mixed NLS  }
\hspace*{\parindent}
Recall transformations (\ref{delta}), (\ref{RT}) and (\ref{ET}), we get
\begin{equation}
\begin{aligned}
N( z)&=E( z)N^{rhp}(z)\mathcal{R}^{(2)}(z)^{-1}\delta^{\sigma_3}.\label{10}
\end{aligned}
\end{equation}
Making the  Laurent expansions
\begin{subequations}
\begin{align}
&N(z)= I+\frac{N_1}{z}+o(z^{-1}),\nonumber\\
&E(z)= I+\frac{E_1}{z}+o(z^{-1}),\nonumber\\
&N^{rhp}(z)=I+\frac{N^{sol}_1}{\sqrt{8t}(z-z_0)} +o(z^{-1}),\label{12} \nonumber\\
&\delta(z)^{\sigma_3}=I+\frac{\delta_1{\sigma_3}}{z} +o(z^{-1}), \nonumber
\end{align}
\end{subequations}
and taking $z\rightarrow \infty$ in vertical direction in $\Lambda_2, \Lambda_5$, the formula (\ref{10})  gives
\begin{equation}
 N_1 =\frac{1 }{\sqrt{8t}}N^{sol}_1 +E_1+\delta_1\sigma_3.\label{13}
\end{equation}

 Next we evaluate the decay rate of the matrix  $E_1$.  From the integral equation (\ref{E}) satisfied by $E$, we have
\begin{equation}
E_1=\frac{1}{\pi}\int\int EW {dA(s)},
\end{equation}
which can be  divided  into two parts for calculation
\begin{equation}\label{94}
\begin{aligned}
|E_1|&\leq\int_0^\infty\int_{v+z_0}^\infty e^{-4tv(u-z_0)}|p'_1| {dudv}+\int_0^\infty\int_{v+z_0}^\infty e^{-4tv(u-z_0)}|s-z_0|^{-1/2}{dudv}\\
&=I_3+I_4.
\end{aligned}
\end{equation}

By using  the Cauchy-Schwarz inequality, we have
\begin{equation}
\begin{aligned}
I_3&\leq c\int_0^\infty \left(\int_v^\infty e^{-4tuv} {du} \right)^{1/2} {dv}\leq ct^{-3/4}.\label{95}
\end{aligned}
\end{equation}
Similarly, by the H\"{o}lder inequality for $1/p+1/q=1$, $2<p<4$,  we let $w=\sqrt{t}v$ and get
\begin{equation}
\begin{aligned}
I_4&  \leq\frac{c}{t^{1/q}}\int_0^\infty v^{2/p-3/2}e^{-4tv^2} {dv}\leq c t^{-3/4} \int_{0}^{\infty} w^{2 / p-3 / 2} e^{-4 w^{2}} d w \leq c t^{-3 / 4}\leq ct^{-3/4}.\label{96}
\end{aligned}
\end{equation}
Substituting (\ref{95}) and (\ref{96}) into (\ref{94}) yields
\begin{equation}
|E_1|\leq ct^{-3/4}.\label{E1}
\end{equation}

Recall the formula (\ref{4.9}), we know
\begin{equation}
q(x,t)=2ie^{4ia\int_{-\infty}^x|m(x',t)|^2dx'}m(x,t),\label{419}
\end{equation}
which implies that  we just make an estimate on the $m(x,t)$ to obtain the asymptotic of the potential  $q(x,t)$.

From (\ref{4.7}), (\ref{4.10}), (\ref{13})  and  (\ref{E1}),  we have
\begin{align}
m(x,t)&=\lim_{\lambda\rightarrow\infty }(\lambda M )_{12}=\lim_{z\rightarrow\infty }(z N )_{12} =(N_1)_{12}\nonumber\\
& = \frac{\beta_{12} }{i\sqrt{8t}}+O(t^{-3/4})= \frac{ 1 }{ \sqrt{2t}}  \alpha(z_0)  e^{i(4tz_{0}^{2}-\nu(z_0)\log8t)}+O(t^{-3/4}),\label{9.10}
\end{align}
  which leads to
$$|m(x,t)|^2\sim \frac{\nu}{8t}, \ \ t\rightarrow\infty.$$
Further calculating gives
\begin{equation}\label{9.11}
\begin{aligned}
\int_{-\infty}^x|m(x',t)|^2\mathrm{d}x'&\sim \int_{-\infty}^x\frac{\nu}{8t}\mathrm{d}x'
=-\frac{1}{2\pi}\int_{\lambda_{0}}^{+\infty}\log(1-|r(\lambda)|^{2})(a\lambda-b)\mathrm{d}\lambda.
\end{aligned}
\end{equation}
with $z_0=\lambda_0(a\lambda_0-2b)$. Finally, substituting (\ref{9.10}) and (\ref{9.11}) into ({\ref{419}) yields
\begin{equation}
q(x,t)=\frac{1}{\sqrt{t}}\alpha(z_0)e^{i(4tz_{0}^{2}-\nu(z_0)\log8t)}e^{-\frac{2ia}{\pi}
\int^{+\infty}_{\lambda_0}\log(1-|r(\lambda)|^2)(a\lambda-b)\mathrm{d}\lambda}+O(t^{-\frac{3}{4}}),
\end{equation}
which leads to  our main results.

\begin{thm} \label{thm1.1}
For initial data $q_0(x)\in H^{2,2}(\mathbb{R}) $ in the Sobolev space
 with reflection coefficient $\rho_1(z), \rho_2(z)\in H_1^{2,2}(\mathbb{R})$,
 then  long-time  asymptotic of  the solution for  the mixed NLS  equation  (\ref{MNLS})  is given by
\begin{equation}
q(x,t)=\frac{1}{\sqrt{t}}\alpha(z_0)e^{i(4tz_{0}^{2}-\nu(z_0)\log8t)}e^{-\frac{2ia}{\pi}\int^{+\infty}_{\lambda_0}\log(1-|r(\lambda)|^2)(a\lambda-b)\mathrm{d}\lambda}+O(t^{-3/4}),\nonumber
\end{equation}
where
\begin{equation}
\alpha(z_{0})=\frac{\pi^{\frac{1}{2}}e^{\frac{\pi\nu}{2}}e^{\frac{i\pi}{4}}e^{2\chi(z_0)}}{(a\lambda_0-2b) r(\lambda_0)\Gamma(-a)},
\label{alpha}
\end{equation}
whose modulus is
\begin{equation}
\vert \alpha(z_0)\vert^2=-\frac{1}{4\pi}\ln(1-|r(\lambda_0)|^2),
\end{equation}
and angle  is
\begin{equation}
\begin{aligned}
\arg \alpha(z_0)&=\frac{1}{\pi}\int_{-\infty}^{z_0}\log|z_0-\lambda(a\lambda-2b)|\mathrm{d}\log(1-|r(\lambda)|^{2})\\
&+\frac{\pi}{4}-\arg [(a\lambda_0-2b)r(\lambda_{0})]+\arg \Gamma(i\nu).
\end{aligned}
\end{equation}
\end{thm}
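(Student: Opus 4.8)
The plan is to assemble the chain of transformations constructed in Sections~3--7 into the single reconstruction identity (\ref{10}), namely $N(z)=E(z)N^{rhp}(z)\mathcal{R}^{(2)}(z)^{-1}\delta^{\sigma_3}$, and then extract from it the coefficient of $z^{-1}$ and read off the $(1,2)$ entry, which by (\ref{4.9}) determines $q(x,t)$. First I would let $z\to\infty$ along a direction lying in $\Lambda_2\cup\Lambda_5$, where $\mathcal{R}^{(2)}=I$, so that the Laurent expansion of the right-hand side is clean. Collecting the $z^{-1}$ terms of the four factors gives $N_1=\tfrac{1}{\sqrt{8t}}N^{sol}_1+E_1+\delta_1\sigma_3$, which is exactly (\ref{13}). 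Since $\delta_1\sigma_3$ is diagonal it contributes nothing to the $(1,2)$ entry, and the small-norm solvability of the pure $\bar\partial$-problem together with the bound (\ref{E1}) shows $(E_1)_{12}=O(t^{-3/4})$. Using $(N^{sol}_1)_{12}=-i\beta_{12}$ from the explicit Weber solution, I obtain $m=(N_1)_{12}=\tfrac{-i\beta_{12}}{\sqrt{8t}}+O(t^{-3/4})$, which is (\ref{9.10}).

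Next I would make this leading coefficient fully explicit. The scaling (\ref{t1})--(\ref{t3}) that passes from $N^{rhp}$ to $N^{sol}$ loads the constant $\rho_{20}$ with the oscillatory factor $e^{-2i\nu(z_0)\ln\sqrt{8t}}e^{4itz_0^2}$, so substituting the closed form (\ref{beta}) for $\beta_{12}$ in terms of $\Gamma(-i\nu)$ and $z_0\rho_{20}$ converts $\tfrac{-i\beta_{12}}{\sqrt{8t}}$ into $\tfrac{1}{\sqrt{2t}}\alpha(z_0)e^{i(4tz_0^2-\nu(z_0)\log 8t)}$, where the residual branch-phase $e^{i\beta(z_0,z_0)}$ recorded in (\ref{bds}) and the parabolic-cylinder normalization combine into the constant $\alpha(z_0)$ of (\ref{alpha}). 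Taking the modulus and inserting $z\rho_1(z)\rho_2(z)=|r(\lambda)|^2$ together with $\nu=-\tfrac{1}{2\pi}\ln(1-|r|^2)$ gives $|\alpha(z_0)|^2=-\tfrac{1}{4\pi}\ln(1-|r(\lambda_0)|^2)$, while the stated argument is read off from $\arg\Gamma(i\nu)$, the $\pi/4$ coming from $e^{i\pi/4}$, and the real part of $\beta(z_0,z_0)$.

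The remaining ingredient is the exponential prefactor in (\ref{419}). From $|q|^2=4|m|^2$ and the leading behaviour just obtained one gets $|m(x,t)|^2\sim\nu/(8t)$, so the $x$-integral $\int_{-\infty}^x|m|^2\,dx'$ must be turned into a $\lambda$-integral. Using $z_0=-x/(4t)$ to change variables $x'\mapsto\lambda$ through $z_0=\lambda(a\lambda-2b)$, and substituting $\nu=-\tfrac{1}{2\pi}\ln(1-|r|^2)$, yields $\int_{-\infty}^x|m|^2\,dx'\sim-\tfrac{1}{2\pi}\int_{\lambda_0}^{+\infty}\log(1-|r(\lambda)|^2)(a\lambda-b)\,d\lambda$, as in (\ref{9.11}); feeding this into $e^{4ia\int_{-\infty}^x|m|^2\,dx'}$ produces the stated exponential factor, and inserting both (\ref{9.10}) and (\ref{9.11}) into (\ref{419}) gives the theorem.

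I expect the main obstacle to be the exact bookkeeping of phases and constants: one must track the $\delta$-branch (\ref{bds}), the scaling factors (\ref{t2})--(\ref{t3}), and the parabolic-cylinder normalization (\ref{beta}) simultaneously so that the power $t^{-1/2}$, the oscillation $e^{i(4tz_0^2-\nu\log 8t)}$, and the $t$-independent amplitude $\alpha(z_0)$ separate cleanly, keeping careful control of the numerical factor relating $\beta_{12}$ to $\alpha(z_0)$. A secondary delicate point is justifying the change of variables in the $|m|^2$ integral, since the reconstruction (\ref{4.9}) is implicit in $|m|^2$; one must argue that replacing $|m|^2$ by its leading asymptotic $\nu/(8t)$ is consistent with the claimed $O(t^{-3/4})$ accuracy, which relies on the decay of the reflection data guaranteed by $\rho_1,\rho_2\in H^{2,2}(\mathbb{R})$.
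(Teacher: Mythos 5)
Your proposal follows essentially the same route as the paper: the reconstruction identity $N(z)=E(z)N^{rhp}(z)\mathcal{R}^{(2)}(z)^{-1}\delta^{\sigma_3}$ expanded along $\Lambda_2\cup\Lambda_5$ to get $N_1=\tfrac{1}{\sqrt{8t}}N^{sol}_1+E_1+\delta_1\sigma_3$, the bound $|E_1|\leq ct^{-3/4}$ from the pure $\bar\partial$-problem, the explicit parabolic-cylinder value $(N^{sol}_1)_{12}=-i\beta_{12}$ with the scaling phases absorbed into $\alpha(z_0)$, and the change of variables turning $\int_{-\infty}^x|m|^2\,dx'$ into the $\lambda$-integral that produces the exponential prefactor. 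This matches the paper's Section 8 argument step for step, so the proposal is correct and not a genuinely different approach.
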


\noindent\textbf{Acknowledgements}

This work is supported by  the National Natural Science
Foundation of China (Grant No. 11671095,  51879045).


\begin{thebibliography}{99}
\bibitem{ago1979} M. Wadati, K. Konno and Y. H. Ichikawa,
\newblock {A Generalization of Inverse Scattering Method},
\newblock {\em J. Phys. A}, 36(1979), 1965-1966.

\bibitem{imf1980} T. Kawata, J. Sakai  and N. Kobayashi,
\newblock {Inverse method for the mixed nonlinear schr\"{o}dinger-equation and solition-solutions},
\newblock {\em J. Phys. Soc. Jpn.}, 48(1980), 1371-1379.

\bibitem{Stiassnie1} M. Stiassnie,
	\newblock {Note on the modified nonlinear schr\"{o}dinger equation for deep water waves},
	\newblock {\em Wave Motion}, 6(1984), 431-433.

\bibitem{Mio1} K. Mio, T. Ogino, K. Minami and  S. Takeda,
	\newblock {Modified nonlinear Schr\"{o}dinger equation for Alfv\'{e}n waves propagating along the magnetic field in cold plasmas},
	\newblock {\em J. Phys. Soc. Jpn.}, 41(1976), 265-271.

\bibitem{Al} G. P. Agrawal,
	\newblock {Nonlinear Fiber Optics, 4th ed.},
	 Academic Press, Boston, 2007.

\bibitem{Yang} J. K. Yang,
	\newblock {Nonlinear Waves in Integrable and Nonintegrable Systems},
	SIAM, Philadelphia, 2010.

\bibitem{Nakatsuka1} H. Nakatsuka, D. Grischkowsky and  A. C. Balant,
	\newblock {Nonlinear picosecond-Pulse propagation through optical fibers with positive group velocity dispersion},
	\newblock {\em Phys. Rev.  Lett.}, 47(1981), 910-913.

\bibitem{Tzoar1} N. Tzoar and  M. Jain,
	\newblock {Self-phase modulation in long-geometry optical waveguides},
	\newblock {\em Phys. Rev. A}, 23(1981), 1266-1270.

\bibitem{RN13} L. Brizhik, A. Eremko and  B. Piette,
	\newblock {Solutions of a D-dimensional modified nonlinear Schr\"{o}dinger equation},
	\newblock {\em Nonlinearity}, 13(2003),  1481-1497.
	
\bibitem{RN5}G. Biondini,  G. Kovacic,
\newblock{Inverse scattering transform for the focusing nonlinear Schrodinger equation with nonzero boundary conditions},
\newblock {\em Journal of Mathematical Physics}, 55(2014), 339-351.

\bibitem{RN6}F. Demontis, B. Prinari,   C. Van Der Mee,
\newblock{The inverse scattering transform for the defocusing nonlinear Schrodinger equations with nonzero boundary conditions},
\newblock{\em Studies in Applied Mathematics},  131(2013),  1-40.


\bibitem{RN4} D. J. Kaup, A. C. Newell,
 \newblock{An exact solution for a derivative nonlinear Schrodinger equation},
 \newblock{\em Journal of Mathematical Physics}, 19(1978), 798-801.

\bibitem{Maimistov1}   A. I. Maimistov,
	\newblock {Evolution of solitary waves which are approximately solitons of a nonlinear Schr\"{o}dinger equation},
	\newblock {\em J. Exp. Theor. Phys.}, 77(1993), 727-731.

\bibitem{pso1985} A. Roy Chowdhury, S. Paul, and S. Sen,
    \newblock {Periodic solutions of the mixed nonlinear Schr\"{o}dinger equation},
    \newblock {\em Phys. Rev.  D}, 32(1985), 3233-3237.

\bibitem{oss1991} B. L. Guo, S. B. Tan,
    \newblock {On smooth solutions to the initial value problem for the mixed
nonlinear Schr\"{o}dinger equations},
    \newblock {\em Proc.   Royal Soc.}, 119A(1991), 31-45.

\bibitem{ows1994} S. B. Tian, L.H. Zhang,
    \newblock {On a weak solution of the Mixed Nonlinear Schr\"{o}dinger Equations},
    \newblock {\em J. Math. Anal.   Appl.}, 182(1994), 409-421.

\bibitem{bsf2004} S. B. Tian,
    \newblock {Blow-up Solutions for Mixed Nonlinear Schr\"{o}dinger Equations},
    \newblock {\em Acta Mathematica Sinica}, 20(2004), 115-124.

\bibitem{mfd2015} X.  L\"{u},
    \newblock {Madelung fluid description on a generalized mixed nonlinear Schr\"{o}dinger equation},
    \newblock {\em Nonlinear Dyn}, 81(2015), 239-247.

\bibitem{sbf2013} X.  L\"{u},
    \newblock {Soliton behavior for a generalized mixed nonlinear Schr\"{o}dinger model with N-fold
Darboux transformation},
    \newblock {\em Chaos}, 23(2013), 033137.

\bibitem{hs2015} J. S. He, S. W.  Xu, and Y. Cheng,
    \newblock {The rational solutions of the mixed nonlinear Schr\"{o}dinger equation},
    \newblock {\em AIP Advances}, 5(2015), 017105.

  

\bibitem{osf}  M. Tsutsumi, I. Fukuda,
	\newblock {On solutions of the derivative nonlinear Schr\"{o}dinger equation. Existence and uniqueness theorem},
	\newblock {\em Funkcial. Ekvac.}, 23 (1980), 259-277.

\bibitem{ht1999}  H. Takaoka,
	\newblock{Well-posedness for the one-dimensional nonlinear Schr\"{o}dinger equation with the derivative nonlinearity},
	\newblock {\em  Adv. Differential Equations}, 4 (1999), 561-580.

\bibitem{nt1992}  N. Hayashi, T. Ozawa,
	\newblock {On the derivative nonlinear Schr\"{o}dinger equation},
	\newblock {\em Phys. D}, 55 (1992), 14-36.

\bibitem{jm2002}  J. Colliander, M. Keel, G. Staffilani, H. Takaoka, and T. Tao,
	\newblock { A refined global well-posedness result
for Schr\"{o}dinger equations with derivative},
	\newblock {\em SIAM J. Math. Anal.}, 34 (2002), 64-86.

\bibitem{zy2017}  Z. Guo, Y. Wu,
	\newblock {Global well-posedness for the derivative nonlinear Schr\"{o}dinger equation
in $H^{1/2}(\mathbb{R}$)},
	\newblock {\em  Discrete Contin. Dyn. Syst.}, 37 (2017), 257-264.

\bibitem{yw2015}  Y. Wu,
	\newblock {Global well-posedness on the derivative nonlinear Schr\"{o}dinger equation},
	\newblock {\em  Anal. PDE}, 8 (2015), 1101-1112.

\bibitem{tdn} R. Jenkins, J. Liu, P. Perry, C. Sulem,
	\newblock {The derivative nonlinear schr\"{o}dinger euqation:
global well-posedness and soliton resolution},
	\newblock {\em Quarterly of applied mathematicas},  78(2020), 33-73.

\bibitem{gwp33} R. Jenkins, J. Liu, P. Perry, C. Sulem,
	\newblock {Global well-posedness for the derivative nonlinear
Schr\"{o}dinger equation},
	\newblock {\em Math. Phys.}, 363(2018), 1003-1049.

\bibitem{IMRN2006}   K. D. T-R McLaughlin, P. D. Miller,
\newblock{$\bar{\partial}$-steepest descent method and the asymptotic behavior of polynomials orthogonal and exponentially varying nonanalytic weights},
	\newblock { \em Int. Math. Res. Not.}, IMRN (ISSN1687-3017) (2006) 48673.

\bibitem{lta29}  M. Dieng, K.D.T-R McLaughlin,
	\newblock {Long-time asymptotics for the NLS equation via $\bar{\partial}$ methods},
	\newblock {arXiv:0805.2807v1.}

\bibitem{lta31}  M. Borghese, R. Jenkins, K. K.D.T-R McLaughlin,
	\newblock {Long time asymptotics behavior of the focusing nonlinear Schr\"{o}dinger equation},
	\newblock {\em Ann. I. H. Poincar\'{e}}, AN 35(2018), 887-920.


\bibitem{ANN2018}   J. Liu, P.A. Perry, C. Sulem,
	\newblock { Long-time behavior of solutions to the derivative nonlinear Schr\"{o}inger equation for soliton-free initial data},
	\newblock {\em Ann. I. H. Poincar\'{e}}, AN 35(2018), 217-265.

 \bibitem{Ma2019}R. H. Ma, E.G. Fan,	
\newblock { Long time asymptotics behavior of the focusing nonlinear Kundu-Eckhaus equation}, arXiv:1912.01425v1, 2019

 \bibitem{yf2019} 	Y. L. Yang, E.G. Fan,
\newblock {Long-time asymptotic behavior of the modified Schrodinger equation via Dbar-steepest descent method}, arXiv:2980918, 2019


\bibitem{CF}  Q. Y. Cheng, E. G. Fan,
\newblock {Long-time asymptotics for a mixed nonlinear Schr\"{o}dinger equation with the Schwartz initial data},
\newblock {\em J. Math. Anal. Appl}, 489(2020), 124188.1-24.

\bibitem{lts22}  P. Deift, X. Zhou,
	\newblock {Long-time asymptotics forsolutions ofthe NLS equation with initial data in a weighted
Sobolev space},
	\newblock {\em Comm. Pure and Appl. Math.}, LVI(2003), 1029-1077.

\end{thebibliography}
\end{document}